\providecommand{\U}[1]{\protect\rule{.1in}{.1in}}
\newtheorem{theorem}{Theorem}
\newtheorem{corollary}[theorem]{Corollary}
\newtheorem{lemma}[theorem]{Lemma}
\newtheorem{proposition}[theorem]{Proposition}
\newenvironment{proof}[1][Proof]{\noindent\textbf{#1.} }{\ \rule{0.5em}{0.5em}}
\begin{document}

\title{Noncommutative Ergodic Theorems}

\author{Anders Karlsson%
\thanks{Royal Swedish Academy of Sciences Research Fellow supported by a grant
from the Knut and Alice Wallenberg Foundation. Supported also by the
Swedish Research Council.%
}\\
Department of Mathematics \\
Royal Institute of Technology\\
100 44 Stockholm \\
Sweden \and François Ledrappier%
\thanks{Supported in part by NSF grant DMS-0500630.%
}\\
Department of Mathematics \\
University of Notre Dame\\
Notre Dame, IN 46556\\
U.S.A. }

\maketitle
\centerline{\emph{To Robert J. Zimmer on the occasion of his sixtieth
birthday}}
\begin{abstract}
We present recent results about the asymptotic behavior of ergodic
products of isometries of a metric space $X$. If we assume that the
displacement is integrable, then either there is a sublinear diffusion
or there is, for almost every trajectory in $X$, a preferred direction
at the boundary. We discuss the precise statement when $X$ is a proper
metric space (\cite{KL1}) and compare it with classical ergodic theorems.
Applications are given to ergodic theorems for nonintegrable functions,
random walks on groups and Brownian motion on covering manifolds.
\end{abstract}
\

In this note, we survey some recent results about the asymptotic behavior
of ergodic products of 1-Lipschitz mappings of a metric space $(X,d)$.
If the mappings are translations on the real line $(\mathbb{R},|\cdot|)$,
then classical ergodic theorems apply, as we recall in Section 1.
In more general settings, a suitable generalization of the convergence
of averages is the ray approximation property: a typical orbit stay
within a $O(\frac{1}{n})$ distance of some (random) geodesic ray
(\cite{Pa}, \cite{K3} and \cite{KM}, see Theorem \ref{KM} below).
Most of this note is devoted to another generalization, valid in the
case when the space $(X.d)$ is proper (see Theorem \ref{main}).
It also says that there is a (random) direction followed by the typical
trajectory, but now a direction is just a point in the metric compactification
of $(X,d)$. We discuss in Section 3 how Theorem \ref{main} yields
the ray approximation property when the space $(X,d)$ is a CAT(0)
metric space, and consequently Oseledets Theorem (following \cite{K3}).
We give in Section 4 some applications when the space $(X,d)$ is
a Gromov hyperbolic space. In particular, by choosing different metrics
on $\mathbb{R}$ we directly show some known ergodic theorems for
nonintegrable functions. We prove Theorem \ref{main} in Section 5
and give applications to Random Walks in Section 6. Section 6 comes
from \cite{KL2}, with slightly simpler proofs. The gist of our results
is that for a random walk with first moment on a locally compact group
with a proper metric, the Liouville property implies that the linear
drift of the random walk, if any, completely comes from a character
on the group (see Section 6 for precise statements). This is to be
compared with the results of Guivarc'h (\cite{G}) in the case of
connected Lie groups. Since our result applies to discrete groups,
it can, through discretization, be applied to Brownian motion on Riemannian
covers of finite volume manifolds. We state in Section 7 the subsequent
result from \cite{KL3}.

\section{Classical Ergodic Theorems.}

We consider a Lebesgue probability space $(\Omega,\mathcal{A},\mathbb{P})$,
an invertible bimeasurable transformation $T$ of the space $(\Omega,\mathcal{A})$
that preserves the probability $\mathbb{P}$, a function $f:\Omega\to\mathbb{R}$,
and we define $S_{n}(\omega):=\sum_{i=0}^{n-1}f(T^{i}\omega).$ This
setting occurs in particular in Statistical Mechanics and in Mechanics,
where $\Omega$ is the space of configurations, $T$ the time 1 evolution
and $\mathbb{P}$ is either the statistical distribution of states
or the Liouville measure on the energy levels. The Ergodic Hypothesis
led to assert that the ergodic averages \[
\frac{1}{n}S_{n}(\omega)=\frac{1}{n}\sum_{i=0}^{n-1}f(T^{i}\omega)\]
 have some asymptotic regularity.

Around 1930, Koopman suggested that it might be useful to consider
the operator $U$ on functions $f$ in $L^{2}(\Omega,\mathbb{P})$
defined by\[
\left(Uf\right)(\omega)=f(T\omega).\]
 Since $T$ is measure-preserving, the operator $U$ is unitary. The
ergodic average then becomes\[
\frac{1}{n}\sum_{k=0}^{n-1}U^{k}f.\]
 The system $(\Omega,\mathcal{A},\mathbb{P};T)$ is said to be ergodic
if the only functions in $L^{2}$ which are invariant under the unitary
operator $U$ are the constant functions. In this text, for the sake
of exposition, we assume that the system $(\Omega,\mathcal{A},\mathbb{P};T)$
is ergodic. Statements for nonergodic systems follow using the decomposition
of the measure $\mathbb{P}$ into ergodic components. As an application
of the Spectral Theorem, von Neumann indeed proved:

\begin{theorem}\label{vonNeumann} {[}von Neumann Ergodic Theorem,
1931{]} Assume that the transformation $T$ is ergodic and that $\int f^{2}d\mathbb{P}<\infty$,
then\[
\frac{1}{n}\sum_{k=0}^{n-1}U^{k}f\rightarrow\int_{\Omega}fd\mathbb{P}\]
 in $L^{2}$. \end{theorem}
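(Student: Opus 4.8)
The plan is to follow von Neumann's Hilbert-space argument, exploiting that $U$ is unitary. Write $A_n := \frac{1}{n}\sum_{k=0}^{n-1} U^k$ for the averaging operator; since $U$ is an isometry each $A_n$ is a contraction, so $\|A_n\| \le 1$ for all $n$. The structural heart of the proof is the orthogonal decomposition $L^2(\Omega,\mathbb{P}) = I \oplus \overline{B}$, where $I := \ker(U - \mathrm{Id})$ is the space of invariant functions and $B := \mathrm{range}(U - \mathrm{Id}) = \{Ug - g : g \in L^2\}$ is the space of coboundaries.

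First I would establish this decomposition by computing $B^\perp$. For $f \in L^2$ one has $f \perp B$ iff $\langle f, Ug - g\rangle = 0$ for every $g$, i.e. $\langle U^*f - f, g\rangle = 0$ for every $g$, i.e. $U^* f = f$. Because $U$ is unitary, $U^* = U^{-1}$, and $U^{-1}f = f$ is equivalent to $f = Uf$; hence $B^\perp = I$. Taking orthogonal complements gives $\overline{B} = (B^\perp)^\perp = I^\perp$, so that $L^2 = I \oplus \overline{B}$. This is the one step where unitarity is essential, and is the point I expect to require the most care.

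Next I would check convergence of $A_n f$ on each summand. If $f \in I$ then $U^k f = f$ for all $k$, so $A_n f = f$ and the limit is $f$ itself. If $f = Ug - g \in B$, the sum telescopes: $A_n f = \frac{1}{n}(U^n g - g)$, and since $\|U^n g\| = \|g\|$ we get $\|A_n f\| \le 2\|g\|/n \to 0$. To extend this to all of $\overline{B}$ I would use the uniform bound $\|A_n\| \le 1$ together with an $\varepsilon/3$ approximation: given $f \in \overline{B}$ and $\varepsilon > 0$, pick $b \in B$ with $\|f - b\| < \varepsilon$, so that $\|A_n f\| \le \|A_n(f-b)\| + \|A_n b\| \le \varepsilon + \|A_n b\|$, whose $\limsup$ is at most $\varepsilon$. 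Hence $A_n f \to 0$ on $\overline{B}$.

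Finally, decomposing an arbitrary $f = f_I + f_B$ with $f_I \in I$ and $f_B \in \overline{B}$ yields $A_n f \to f_I$, i.e. $A_n$ converges strongly to the orthogonal projection onto $I$. By the ergodicity hypothesis the only invariant functions are the constants, so $I$ is the one-dimensional space of constant functions, and the orthogonal projection of $f$ onto the constants is precisely the constant function $\int_\Omega f \, d\mathbb{P}$. This gives $A_n f \to \int_\Omega f \, d\mathbb{P}$ in $L^2$, as claimed. Alternatively, one could invoke the Spectral Theorem directly, writing $U = \int_{S^1}\lambda\, dE(\lambda)$ and using that $\frac{1}{n}\sum_{k=0}^{n-1}\lambda^k \to \mathbf{1}_{\{1\}}(\lambda)$ boundedly in $\lambda$, but the Hilbert-space argument above avoids the full spectral machinery.
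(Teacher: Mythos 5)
Your proof is correct, but it takes a genuinely different route from the one the paper indicates: the paper offers no proof of Theorem \ref{vonNeumann}, stating only that von Neumann obtained it ``as an application of the Spectral Theorem,'' i.e.\ the argument you sketch in your last sentence, writing $U=\int_{S^{1}}\lambda\, dE(\lambda)$ and passing to the limit in $\frac{1}{n}\sum_{k=0}^{n-1}\lambda^{k}\to\mathbf{1}_{\{1\}}(\lambda)$ by bounded convergence, so that $A_{n}f\to E(\{1\})f$, the projection onto the invariant vectors. What you actually carry out instead is the Riesz-style splitting $L^{2}=\ker(U-\mathrm{Id})\oplus\overline{\mathrm{range}(U-\mathrm{Id})}$, with the telescoping estimate $\|A_{n}(Ug-g)\|\le 2\|g\|/n$ on coboundaries, the $\varepsilon/3$ density argument using $\|A_{n}\|\le 1$, and the identification, via ergodicity and $\|\mathbf{1}\|_{L^{2}(\mathbb{P})}=1$, of the limiting projection with $f\mapsto\int_{\Omega}f\,d\mathbb{P}$; all of these steps are sound and the proof is complete. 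The comparison is instructive: the spectral route is shorter but invokes the full spectral calculus for unitaries, while your decomposition argument is elementary and, more to the point here, more robust --- the paper remarks at the start of its Section 2 that the theorem persists for any linear $U$ with $\|U\|\le 1$, a setting where the spectral theorem is unavailable for non-normal contractions but your argument survives essentially unchanged. The only adjustment needed there concerns the one step you flag as using unitarity, the computation $B^{\perp}=\ker(U^{*}-\mathrm{Id})\overset{?}{=}\ker(U-\mathrm{Id})$: for a mere contraction one replaces $U^{*}=U^{-1}$ by the standard lemma that $U^{*}f=f$ iff $Uf=f$, since $U^{*}f=f$ gives $\|Uf-f\|^{2}=\|Uf\|^{2}-2\,\mathrm{Re}\langle f,U^{*}f\rangle+\|f\|^{2}\le\|f\|^{2}-\|f\|^{2}=0$, and symmetrically with $U$ and $U^{*}$ exchanged. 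So unitarity is essential only to your particular manipulation, not to the method --- worth knowing, given the paper's subsequent use of the $\|U\|\le 1$ case.
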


This prompted Birkhoff to prove an almost everywhere convergence theorem:
\begin{theorem}\label{Birkhoff}{[}Birkhoff Ergodic Theorem, 1931{]}
Assume that the transformation $T$ is ergodic, and that $\int\max(f,0)d\mathbb{P}<\infty$,
then for $\mathbb{P}$-almost every $\omega$, as $n\to\infty$: \[
\frac{1}{n}S_{n}(\omega)\;\rightarrow\;\int fd\mathbb{P}.\]
 \end{theorem}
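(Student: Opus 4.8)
The plan is to reduce the statement to a single maximal inequality, the \emph{maximal ergodic theorem}, and then extract almost-everywhere convergence from it, treating separately the genuinely integrable case and the case $\int f\,d\mathbb{P}=-\infty$ that is allowed by the hypothesis $\int\max(f,0)\,d\mathbb{P}<\infty$.

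First I would establish the maximal inequality: for $g\in L^{1}$, writing $g_{n}=\sum_{i=0}^{n-1}g\circ T^{i}$ (with $g_{0}=0$) and $A=\{\omega:\sup_{n\geq1}g_{n}(\omega)>0\}$, one has $\int_{A}g\,d\mathbb{P}\geq0$. The efficient route (Garsia) sets $M_{N}=\max(0,g_{1},\dots,g_{N})\geq0$, which lies in $L^{1}$, and exploits the cocycle recursion $g_{n}=g+g_{n-1}\circ T$. For $1\leq n\leq N$ this gives $g_{n}\leq g+M_{N}\circ T$, so on $\{M_{N}>0\}$, where $M_{N}=\max_{1\leq n\leq N}g_{n}$, we get $g\geq M_{N}-M_{N}\circ T$. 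Integrating over $\{M_{N}>0\}$, using that $M_{N}$ vanishes off this set, that $M_{N}\circ T\geq0$, and that $\int M_{N}\circ T\,d\mathbb{P}=\int M_{N}\,d\mathbb{P}$ by measure preservation, yields $\int_{\{M_{N}>0\}}g\,d\mathbb{P}\geq0$; letting $N\to\infty$ and dominating by $|g|$ gives the claim.

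Assuming next that $f\in L^{1}$, I would deduce $\bar f:=\limsup_{n}\tfrac1n S_{n}\leq\int f\,d\mathbb{P}$ almost everywhere. The function $\bar f$ is $T$-invariant, hence constant a.e.\ by ergodicity. For a rational $\alpha>\int f\,d\mathbb{P}$, consider the invariant set $E_{\alpha}=\{\bar f>\alpha\}$; restricting the system to $E_{\alpha}$ (legitimate since $E_{\alpha}$ is $T$-invariant) and applying the maximal inequality to $g=f-\alpha$, one notes that $\limsup_{n}\tfrac1n\sum_{i=0}^{n-1}g\circ T^{i}=\bar f-\alpha>0$ throughout $E_{\alpha}$, so $A=E_{\alpha}$ there and the inequality reads $\int_{E_{\alpha}}f\,d\mathbb{P}\geq\alpha\,\mathbb{P}(E_{\alpha})$. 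By ergodicity $\mathbb{P}(E_{\alpha})\in\{0,1\}$, and $\mathbb{P}(E_{\alpha})=1$ would force $\int f\,d\mathbb{P}\geq\alpha$, a contradiction; hence $\mathbb{P}(E_{\alpha})=0$. Taking the union over rationals $\alpha\downarrow\int f\,d\mathbb{P}$ gives $\bar f\leq\int f\,d\mathbb{P}$. Applying the same bound to $-f$ and using $\liminf_{n}\tfrac1n S_{n}=-\limsup_{n}\tfrac1n\sum_{i=0}^{n-1}(-f)\circ T^{i}$ yields $\liminf_{n}\tfrac1n S_{n}\geq\int f\,d\mathbb{P}$, and the two inequalities combine to $\tfrac1n S_{n}\to\int f\,d\mathbb{P}$ a.e.

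Finally I would remove the integrability assumption. Under $\int f^{+}\,d\mathbb{P}<\infty$ the integral $\int f\,d\mathbb{P}\in[-\infty,\infty)$ is well defined; if it is finite, then $f\in L^{1}$ and the previous step finishes the argument. If $\int f\,d\mathbb{P}=-\infty$, I would truncate by $f_{K}=\max(f,-K)\in L^{1}$, which satisfies $f_{K}\geq f$, so $\tfrac1n S_{n}(f)\leq\tfrac1n S_{n}(f_{K})\to\int f_{K}\,d\mathbb{P}$ a.e.; since $f_{K}\downarrow f$ with $f_{1}\in L^{1}$, monotone convergence gives $\int f_{K}\,d\mathbb{P}\to-\infty$, whence $\tfrac1n S_{n}(f)\to-\infty=\int f\,d\mathbb{P}$. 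The main obstacle is the maximal inequality itself: everything afterward is bookkeeping, but the telescoping estimate for $M_{N}$ is where the genuine content lies. I would also flag the mild asymmetry created by assuming only $f^{+}$ integrable, which is exactly what the truncation step in this last paragraph repairs.
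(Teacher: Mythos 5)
Your proof is correct, but it takes a genuinely different route from the paper. You prove Birkhoff the classical way: Garsia's telescoping argument for the maximal ergodic theorem (the recursion $g_{n}=g+g_{n-1}\circ T$ giving $g\geq M_{N}-M_{N}\circ T$ on $\{M_{N}>0\}$, then dominated convergence as $N\to\infty$), followed by the standard deduction $\limsup_{n}\tfrac{1}{n}S_{n}\leq\int f\,d\mathbb{P}$ via the invariant sets $E_{\alpha}$ and the ergodicity dichotomy, applied symmetrically to $-f$; all steps check out, including the truncation $f_{K}=\max(f,-K)$ for the case $\int f\,d\mathbb{P}=-\infty$ permitted by the one-sided hypothesis. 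The paper, by contrast, uses no maximal inequality: it derives Theorem \ref{Birkhoff} from Lemma \ref{key} (Proposition 4.2 of \cite{KM}), a pointwise combinatorial statement about subadditive sequences which yields $\liminf_{k}\tfrac{1}{k}S_{k}(\omega)\geq\alpha$ with $\alpha=\inf_{n}\tfrac{1}{n}\int S_{n}\,d\mathbb{P}$; since an additive cocycle is subadditive in both signs, applying this to $S_{k}$ and to $-S_{k}$ (for which $\alpha$ equals $\int f\,d\mathbb{P}$ and $-\int f\,d\mathbb{P}$ respectively) traps $\tfrac{1}{n}S_{n}$ between matching bounds, and the case $\int f\,d\mathbb{P}=-\infty$ is handled by truncating the cocycle itself, replacing $S_{n}$ by $\max(S_{n},-nM)$ and letting $M\to\infty$ --- the same repair you make, but at the level of $S_{n}$ rather than of $f$. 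The trade-off: your argument is self-contained and elementary, but it is intrinsically additive --- the identity $g_{n}=g+g_{n-1}\circ T$ has no subadditive analogue, so the maximal-inequality route gives Birkhoff only; the paper's Lemma \ref{key} simultaneously yields Kingman's Theorem \ref{Kingman} and is the engine behind the noncommutative results (Theorems \ref{KM} and \ref{main}), which is precisely why the paper channels Birkhoff through it.
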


A variant of the ergodic theorem applies to \textit{subadditive}  sequences.
A sequence $S_{n}$ of real functions on $\Omega$ is said to be subadditive
if, for $\mathbb{P}$-almost every $\omega$, all natural integers
$n,m$: \[
S_{n+m}(\omega)\;\leq\; S_{m}(\omega)+S_{n}(T^{m}\omega).\]
 \begin{theorem}\label{Kingman}{[}Kingman Subadditive Ergodic Theorem,
1968{]} Assume that the transformation $T$ is ergodic, and that $\int\max(S_{1},0)d\mathbb{P}<\infty$,
then for $\mathbb{P}$-almost every $\omega$, as $n\to\infty$: \[
\frac{1}{n}S_{n}(\omega)\;\rightarrow\;\inf_{n}\frac{1}{n}\int S_{n}d\mathbb{P}.\]
 \end{theorem}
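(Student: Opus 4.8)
The plan is to set $\gamma:=\inf_{n}\frac{1}{n}\int S_{n}\,d\mathbb{P}$ and to prove that both $\overline{S}(\omega):=\limsup_{n}\frac{1}{n}S_{n}(\omega)$ and $\underline{S}(\omega):=\liminf_{n}\frac{1}{n}S_{n}(\omega)$ equal $\gamma$ almost everywhere. I would first record two preliminaries. Writing $a_{n}=\int S_{n}\,d\mathbb{P}$, the hypothesis $\int\max(S_{1},0)\,d\mathbb{P}<\infty$ together with $T$-invariance of $\mathbb{P}$ gives $\int S_{n}(T^{m}\cdot)\,d\mathbb{P}=a_{n}$, so subadditivity yields $a_{n+m}\le a_{n}+a_{m}$, and Fekete's lemma gives $\frac{1}{n}a_{n}\to\gamma$. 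If $\gamma=-\infty$ the upper bound below already forces $\overline{S}=-\infty$ a.e. and there is nothing more to prove, so I may assume $\gamma\in\mathbb{R}$, in particular $S_{1}\in L^{1}$. To see that $\overline{S},\underline{S}$ are a.e. constant, I would use the one-step inequality $S_{n+1}(\omega)\le S_{1}(\omega)+S_{n}(T\omega)$: dividing by $n+1$ and letting $n\to\infty$ (with $S_{1}$ finite a.e.) gives $\overline{S}\le\overline{S}\circ T$ and $\underline{S}\le\underline{S}\circ T$ a.e. For any measurable $h$ with $h\le h\circ T$, each superlevel set obeys $\{h>c\}\subseteq T^{-1}\{h>c\}$, hence is $T$-invariant by measure preservation; thus $h$ is invariant, and by ergodicity $\overline{S}\equiv\bar{s}$ and $\underline{S}\equiv\underline{s}$ are constants.

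For the upper bound $\bar{s}\le\gamma$, fix $m$ and write $n=qm+r$ with $0\le r<m$. Iterating subadditivity along blocks of length $m$ gives $S_{n}(\omega)\le\sum_{k=0}^{q-1}S_{m}(T^{km}\omega)+S_{r}(T^{qm}\omega)$. Applying Birkhoff's Theorem \ref{Birkhoff} to the transformation $T^{m}$ and the function $S_{m}$ (whose positive part is integrable), the average $\frac{1}{q}\sum_{k=0}^{q-1}S_{m}(T^{km}\omega)$ converges a.e. to a $T^{m}$-invariant limit of integral $a_{m}$, while the remainder $S_{r}(T^{qm}\omega)$ is negligible after division by $n$. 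Hence $\overline{S}$ is bounded by $\frac{1}{m}$ times that limit; integrating the constant $\bar{s}$ gives $\bar{s}\le\frac{1}{m}a_{m}$ for every $m$, so $\bar{s}\le\gamma$. Integrating only at the end is what lets me avoid needing $T^{m}$ to be ergodic.

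The crux is the lower bound $\underline{s}\ge\gamma$. Fix $\varepsilon>0$ and define the stopping time $\tau(\omega)=\min\{k\ge1:\frac{1}{k}S_{k}(\omega)\le\underline{s}+\varepsilon\}$, which is finite a.e. since $\liminf_{n}\frac{1}{n}S_{n}=\underline{s}$. Choose $M$ with $\mathbb{P}(\tau>M)$ small, put $B=\{\tau>M\}$, and set $\tilde{\tau}=\tau$ off $B$ and $\tilde{\tau}=1$ on $B$. Starting from $\omega$, partition $[0,n)$ into consecutive blocks via $n_{0}=0$, $n_{j+1}=n_{j}+\tilde{\tau}(T^{n_{j}}\omega)$, stopping before overshooting $n$. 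Subadditivity bounds $S_{n}(\omega)$ by the sum over blocks of $S_{\tilde{\tau}(T^{n_{j}}\omega)}(T^{n_{j}}\omega)$ plus a terminal block of length $<M$. A good block contributes at most $(\underline{s}+\varepsilon)\,\tilde{\tau}(T^{n_{j}}\omega)$ by definition of $\tau$, a bad block has length $1$ and contributes at most $\max(S_{1},0)(T^{n_{j}}\omega)$, and the terminal block is controlled by $S_{\ell}\le\sum_{i<\ell}\max(S_{1},0)\circ T^{i}$. Taking expectations and using Birkhoff to evaluate the frequency and $L^{1}$-mass of the bad and terminal contributions yields
\[
\frac{1}{n}a_{n}\;\le\;\underline{s}+\varepsilon+\int_{B}\max(S_{1},0)\,d\mathbb{P}+o(1).
\]
Letting $n\to\infty$ so that $\frac{1}{n}a_{n}\to\gamma$, then $M\to\infty$ so that $\mathbb{P}(B)\to0$ and $\int_{B}\max(S_{1},0)\to0$ by dominated convergence, and finally $\varepsilon\to0$, gives $\gamma\le\underline{s}$. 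Combined with the previous step this gives $\underline{s}=\bar{s}=\gamma$, i.e. $\frac{1}{n}S_{n}\to\gamma$ a.e.

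The main obstacle is this last step: arranging the stopping-time decomposition so that the favorable blocks cover almost all of $[0,n)$ while the error from the bad set and the terminal block stays bounded uniformly in $n$. The truncation level $M$ is precisely the device keeping the terminal and overshoot terms bounded for fixed $M$ (so they vanish after dividing by $n$), and one must also track the sign of $\underline{s}+\varepsilon$, since when it is negative the good blocks must be shown to cover a proportion of $[0,n)$ tending to $1$; the integrability $\max(S_{1},0)\in L^{1}$ is what forces $\int_{B}\max(S_{1},0)\to0$. These bookkeeping points are where the argument must be handled with care.
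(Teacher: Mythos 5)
Your argument is correct in outline, and it is a genuinely different route from the paper's: what you have written is essentially Steele's proof of Kingman's theorem. The paper instead derives the crucial lower bound from Lemma \ref{key} (Karlsson--Margulis): that lemma produces, for a.e.\ $\omega$ and every $\varepsilon>0$, infinitely many times $n$ with $S_{n}(\omega)-S_{n-k}(T^{k}\omega)\geq(\alpha-\varepsilon)k$ for all $K\leq k\leq n$, which together with subadditivity immediately gives $\liminf_{k}S_{k}(\omega)/k\geq\alpha$; the upper bound in the paper is the same block argument as yours ($\limsup_{k}S_{k}/k$ is an invariant constant bounded by $\limsup_{p}\frac{1}{pk}\sum_{j=0}^{p-1}S_{k}(T^{jk}\omega)$, hence by $\frac{1}{k}\int S_{k}\,d\mathbb{P}$), and the case $\alpha=-\infty$ is handled by replacing $S_{n}$ with $\max(S_{n},-nM)$ and letting $M\to\infty$. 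The trade-off: the paper's route requires the harder Lemma \ref{key}, but that lemma is the engine for the whole noncommutative theory (Theorems \ref{KM}, \ref{Ka}, \ref{main}), and the "good times" it supplies carry more information than mere convergence; your stopping-time covering argument is self-contained, uses nothing beyond Birkhoff, and is the shortest known proof of Kingman per se, but yields only the convergence statement.

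There is one genuine, though repairable, gap. In the lower bound you define $\tau(\omega)=\min\{k\geq1:\,S_{k}(\omega)/k\leq\underline{s}+\varepsilon\}$ and assert it is finite a.e.; this presupposes $\underline{s}>-\infty$, which at that point is not known. Ruling out $\underline{s}=-\infty$ when $\gamma$ is finite is exactly what the lower bound must accomplish, so as written the step is circular: if $\underline{s}=-\infty$, the defining condition $S_{k}/k\leq-\infty$ is never met and $\tau\equiv\infty$. The standard repair (as in Steele, and parallel to the paper's $\max(S_{n},-nM)$ truncation) is to run your argument with $\max(\underline{s},-L)$ in place of $\underline{s}$, i.e.\ $\tau=\min\{k\geq1:\,S_{k}(\omega)/k\leq\max(\underline{s},-L)+\varepsilon\}$, which is finite a.e.\ for every $L$ since $\liminf_{k}S_{k}/k=\underline{s}\leq\max(\underline{s},-L)$; the same bookkeeping then gives $\gamma\leq\max(\underline{s},-L)+\varepsilon+\int_{B}\max(S_{1},0)\,d\mathbb{P}+o(1)$, and letting $L\to\infty$ contradicts $\gamma>-\infty$ unless $\underline{s}>-\infty$, after which your argument proceeds verbatim. (Your remark about tracking the sign of the coefficient then applies to $\max(\underline{s},-L)+\varepsilon$ as well; the truncation at $M$ already ensures the good blocks cover a proportion of $[0,n)$ tending to $1-\mathbb{P}(B)$, which suffices.) Everything else --- the invariance argument for $\overline{S}$ and $\underline{S}$, the use of the general (non-ergodic) Birkhoff theorem for $T^{m}$ with integration only at the end, and disposing of $\gamma=-\infty$ via the upper bound alone --- is sound.
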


Proofs of Theorems \ref{Birkhoff} and \ref{Kingman} often appeal
to some combinatorics of the sequence $S_{n}(\omega)$ along individual
orbits. The following technical Lemma was proven by the first author
and Margulis:

\begin{lemma}\label{key}{[}\cite{KM}, Proposition 4.2{]} Let $S_{n}$
be a subadditive sequence on an ergodic dynamical system $(\Omega,\mathcal{A},\mathbb{P};T)$.
Assume that $\int\max(S_{1},0)d\mathbb{P}<\infty$ and that $\alpha:=\inf_{n}\frac{1}{n}\int S_{n}d\mathbb{P}>-\infty$.
Then, for $\mathbb{P}$ a.e. $\omega$, all $\varepsilon>0$, there
exist $K=K(\omega)$ and an infinite number of instants $n$ such
that: \[
S_{n}(\omega)-S_{n-k}(T^{k}\omega)\;\geq\;(\alpha-\varepsilon)k\textrm{ for all }k,K\leq k\leq n.\]
 \end{lemma}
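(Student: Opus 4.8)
\section*{Proof proposal for Lemma \ref{key}}

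The plan is to reduce the statement to a question about the suffixes of a subadditive cocycle with \emph{positive} drift, and then to recognize the desired inequality as a near-record condition for the time-reversed cocycle. Fix $\varepsilon>0$, set $\beta:=\alpha-\varepsilon$, and define $\phi_n:=S_n-\beta n$. Since the linear term is additive, $\phi_n$ is again subadditive, $\int\max(\phi_1,0)\,d\mathbb{P}<\infty$, and $\inf_n\frac1n\int\phi_n\,d\mathbb{P}=\alpha-\beta=\varepsilon>-\infty$. By Kingman's theorem (Theorem \ref{Kingman}) applied to $\phi_n$ we get $\frac1n\phi_n(\omega)\to\varepsilon>0$ for $\mathbb{P}$-a.e.\ $\omega$, so $\phi_n(\omega)\to+\infty$. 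A direct computation shows that the target inequality is equivalent to $\phi_n(\omega)\ge\phi_{n-k}(T^k\omega)$. Moreover subadditivity gives $\phi_n\le\phi_k+\phi_{n-k}(T^k\cdot)$, hence $\phi_{n-k}(T^k\omega)-\phi_n(\omega)\ge-\phi_k(\omega)$; thus the inequality is \emph{forced to fail} whenever $\phi_k(\omega)<0$. This is exactly why the conclusion only asks for $k\ge K$: because $\phi_k(\omega)\to+\infty$ we may fix $K=K(\omega)$ with $\phi_k(\omega)\ge0$ for all $k\ge K$, removing this obstruction.

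Next I would reverse time. Using that $T$ is invertible and preserves $\mathbb{P}$, set $g_i(\eta):=\phi_i(T^{-i}\eta)$. One checks directly that $g_{i+j}(\eta)\le g_j(\eta)+g_i(T^{-j}\eta)$, so $g$ is subadditive for the transformation $T^{-1}$, and $\int g_i\,d\mathbb{P}=\int\phi_i\,d\mathbb{P}$, so $g$ has the same drift $\varepsilon$. Kingman's theorem for $T^{-1}$ then gives $g_i(\eta)\to+\infty$ for a.e.\ $\eta$. The point of this change of variables is that, writing $\eta=T^n\omega$, one has $g_n(\eta)=\phi_n(\omega)$ and $g_{n-k}(\eta)=\phi_{n-k}(T^k\omega)$, so the desired condition at $(\omega,n)$ is exactly
\[
g_n(\eta)\ \ge\ g_i(\eta)\quad\text{for all }0\le i\le n-K ,
\]
i.e.\ the index $n$ is a near-record of the reversed walk observed from the base point $\eta=T^n\omega$.

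Records of the reversed walk are easy to produce: since $g_i(\eta)\to+\infty$, the running maxima $\max_{i\le N}g_i(\eta)$ are attained at infinitely many record times $N$, at each of which $g_N(\eta)\ge g_i(\eta)$ for all $i\le N$ (a fortiori for $i\le N-K$). Fixing an integer $K$, let $W_N:=\{\eta:g_N(\eta)\ge\max_{i\le N-K}g_i(\eta)\}$. By the record statement, a.e.\ $\eta$ belongs to infinitely many $W_N$, so $\mathbb{E}\big[\sum_N\mathbf 1_{W_N}\big]=\sum_N\mathbb{P}(W_N)=\infty$. Setting $A_N:=T^{-N}W_N$ (the set of $\omega$ for which the condition holds at index $N$), we have $\mathbb{P}(A_N)=\mathbb{P}(W_N)$ and hence $\sum_N\mathbb{P}(A_N)=\infty$.

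The main obstacle is the final \emph{transfer to the diagonal}: we must pass from ``for a fixed base point there are infinitely many records'' to ``for a.e.\ fixed $\omega$ there are infinitely many indices $n$ that are records as seen from the moving point $T^n\omega$,'' that is, to $\mathbb{P}(\limsup_N A_N)=1$ (for a suitable $K$, then taking a countable union over $K$ to absorb the $\omega$-dependence of $K(\omega)$). Divergence of $\sum_N\mathbb{P}(A_N)$ alone does not give this, since the events $A_N=T^{-N}W_N$ are not independent and $\limsup_N A_N$ is not manifestly $T$-invariant. I expect this coupling between the growing index $n$ and the shifting base point $T^n\omega$ to be the crux, and I would resolve it by an ergodic-theoretic argument: using Poincaré recurrence for the a.e.-finite function $\eta\mapsto-\inf_{j}g_j(\eta)$ together with the uniform linear growth $g_i(\eta)\sim\varepsilon i$ supplied by Kingman's theorem, one selects record times of the reversed walk that simultaneously occur at recurrence times of the forward orbit, thereby producing infinitely many admissible $n$ for a.e.\ $\omega$.
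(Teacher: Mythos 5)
Your reduction is correct as far as it goes, and it is a genuinely good start: $\phi_n:=S_n-(\alpha-\varepsilon)n$ is subadditive with drift $\varepsilon>0$, the target inequality is indeed equivalent to $\phi_n(\omega)\ge\phi_{n-k}(T^k\omega)$, the reversed cocycle $g_i(\eta)=\phi_i(T^{-i}\eta)$ is subadditive for $T^{-1}$ (legitimate here, since the paper assumes $T$ invertible), and with $\eta=T^n\omega$ the desired conclusion at time $n$ is exactly the near-record condition $g_n(\eta)\ge g_i(\eta)$ for $0\le i\le n-K$. Your explanation of why the cutoff $K$ is unavoidable (subadditivity forces failure at any $k$ with $\phi_k(\omega)<0$) is also correct. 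But note that everything up to this point is a change of variables: Lemma \ref{key} \emph{is} the assertion that near-records occur infinitely often along the diagonal $\eta=T^n\omega$, and your proof stops exactly there. You acknowledge the gap honestly, but the patch you sketch does not close it, so what you have is a correct reformulation of the lemma, not a proof. (The survey itself offers no proof to compare against --- it cites \cite{KM}, Proposition 4.2 --- so the proposal must stand on its own, and it does not.)

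Concretely, here is why the sketched finish fails. The event $W_N$ depends on $N$ through the entire initial backward segment $g_0(\cdot),\dots,g_N(\cdot)$ evaluated at the moving point $T^N\omega$. Poincar\'e recurrence for $\eta\mapsto-\inf_j g_j(\eta)$ controls how low the backward trajectory dips, i.e.\ gives a \emph{lower} bound on $g_i(T^N\omega)$; the record condition requires the \emph{upper} bound $g_i(T^N\omega)\le g_N(T^N\omega)$, and for $i$ comparable to $N$ this is a real constraint: $g_i(T^N\omega)=\phi_i(T^{N-i}\omega)$, and the a.e.\ asymptotics $\phi_i/i\to\varepsilon$ from Theorem \ref{Kingman} carry no uniformity over the moving base points $T^{N-i}\omega$. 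A sanity check that your recurrence idea misses the point: for an \emph{additive} cocycle one has $\phi_n(\omega)-\phi_i(T^{n-i}\omega)=\phi_{n-i}(\omega)$, so every sufficiently large $n$ is good and the lemma is trivial; the entire difficulty is the subadditive defect, which recurrence of $-\inf_j g_j$ never touches. There is also a secondary unproved step: $\limsup_N A_N$ is only \emph{quasi}-invariant --- goodness of $n$ for $\omega$ transfers to goodness of $n-1$ for $T\omega$ only after enlarging $K$ to absorb $\phi_1(\omega)$ using the slack $\varepsilon k\ge\varepsilon K$ --- and you would need this to promote positive measure to full measure via ergodicity. For the record, your route can be completed (it is essentially the later Gou\"ezel--Karlsson proof, distinct from the argument in \cite{KM}): the key move is to replace the $N$-dependent record events $W_N$ by a single \emph{stationary} set of positive measure contained in every $W_N$ at its visit times, apply Birkhoff's theorem to its indicator along the forward orbit, and then run the quasi-invariance argument above; constructing that stationary set and proving it has positive measure is precisely the work your proposal leaves undone.
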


In particular, it follows from subadditivity that $\liminf_{k}\frac{S_{k}(\omega)}{k}\geq\alpha$.
Therefore Theorem \ref{Birkhoff} follows (in the case $\int fd\mathbb{P}>-\infty$)
because in that case, both sequences $S_{k}$ and $-S_{k}$ are subadditive
and \[
\inf_{n}\frac{1}{n}\int S_{n}d\mathbb{P}=\sup_{n}\frac{1}{n}\int S_{n}d\mathbb{P}=\int fd\mathbb{P}=\alpha.\]
 On the other hand, $\limsup_{k}\frac{S_{k}}{k}$ is a $T$-invariant
function which, by subadditivity, is not bigger than $\limsup_{p}\frac{1}{pk}\sum_{j=0}^{p-1}S_{k}(T^{jk}\omega)$.
Thus the constant $\limsup_{k}\frac{S_{k}}{k}$ is not bigger than
$\frac{1}{k}\int S_{k}d\mathbb{P}.$ Theorem \ref{Kingman} follows
in the case when $\alpha>-\infty.$ To treat the case $\alpha=-\infty$
in both theorems, it suffices to replace $S_{n}$ by $\max(S_{n},-nM)$,
and to let $M$ go to infinity, see \cite{Kr} for details.

\

\section{Noncommutative Ergodic Theorems.}

Observe that Theorem \ref{vonNeumann} also holds true for any linear
operator $U$ of a Hilbert space assuming $\left\Vert U\right\Vert \leq1$.
One can take one step further and define for any $g\in\mathcal{H}$,
$\phi(g):=Ug+f.$ Then $\phi$ is an isometry (or merely 1-Lipschitz
in the case $\left\Vert U\right\Vert \leq1$).

Note that\[
\phi^{n}(0)=\sum_{k=0}^{n-1}U^{k}f.\]
 Pazy proved in \cite{Pa} that more generally for any map $\phi:\mathcal{H}\rightarrow\mathcal{H}$
such that $\left\Vert \phi(x)-\phi(y)\right\Vert \leq\left\Vert x-y\right\Vert ,$
it holds that there is a vector $v\in\mathcal{H}$ such that\[
\frac{1}{n}\phi^{n}(0)\rightarrow v\]
 in norm. This can be reformulated as follows: There is a unit speed
geodesic $\gamma(t)=tv/\left\Vert v\right\Vert $ in $\mathcal{H}$
such that\begin{equation}
\frac{1}{n}\left\Vert \phi^{n}(0)-\gamma(n\left\Vert v\right\Vert )\right\Vert =\frac{1}{n}\left\Vert \phi^{n}(0)-nv\right\Vert \rightarrow0.\label{rayapprox}\end{equation}

We call this property { \textit{ray approximation}. It turns out
that this generalization of the ergodic theorem still holds for more
general group actions than the actions of $\mathbb{Z}$. Let $G$
be a second countable, locally compact, Hausdorff topological semi-group,
and consider $g:\Omega\to G$ a measurable map. We form \[
Z_{n}(\omega):=g(\omega)g(T\omega)...g(T^{n-1}\omega)\]
 and we ask whether $Z_{n}$ converges to infinity with some linear
speed.}

Assume $G$ acts on a metric space $(X,d)$ by 1-Lipschitz transformations.
Then for a fixed $x_{0}\in X$, we can define, for $g\in G$, $|g|:=d(x_{0},gx_{0}).$
Clearly, up to a bounded error, $|Z_{n}(\omega)|$ does not depend
on our choice of $x_{0}$. We have: \begin{proposition}\label{drift}
Assume the transformation $T$ is ergodic, and $\int|g|d\mathbb{P}<\infty$.
Then there is a nonnegative number $\alpha$ such that for $\mathbb{P}$-almost
every $\omega$, as $n\to\infty$: \[
\frac{1}{n}|Z_{n}(\omega)|\;\rightarrow\;\alpha.\]
 The number $\alpha$ is given by \begin{equation}
\alpha=\inf_{n}\frac{1}{n}\int|Z_{n}(\omega)|d\mathbb{P}.\label{alpha}\end{equation}
 \end{proposition}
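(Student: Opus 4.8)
The plan is to recognize the sequence $S_{n}(\omega):=|Z_{n}(\omega)|=d(x_{0},Z_{n}(\omega)x_{0})$ as subadditive over the ergodic system $(\Omega,\mathcal{A},\mathbb{P};T)$ and then to read off the conclusion directly from the Kingman Subadditive Ergodic Theorem \ref{Kingman}. The whole statement, including the formula (\ref{alpha}), is essentially a translation of that theorem once subadditivity is established.

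The first and essential step is the cocycle identity. Splitting the product defining $Z_{n+m}(\omega)$ after its first $m$ factors, the remaining block $g(T^{m}\omega)g(T^{m+1}\omega)\cdots g(T^{m+n-1}\omega)$ is precisely $Z_{n}(T^{m}\omega)$, so that $Z_{n+m}(\omega)=Z_{m}(\omega)\,Z_{n}(T^{m}\omega)$. From this I would deduce subadditivity of $S_{n}$: evaluating at $x_{0}$ and inserting the intermediate point $Z_{m}(\omega)x_{0}$, the triangle inequality followed by the hypothesis that $Z_{m}(\omega)$ acts as a 1-Lipschitz transformation (being a product of such) gives
\[
|Z_{n+m}(\omega)|\;\leq\;|Z_{m}(\omega)|+d\bigl(Z_{m}(\omega)x_{0},\,Z_{m}(\omega)Z_{n}(T^{m}\omega)x_{0}\bigr)\;\leq\;|Z_{m}(\omega)|+|Z_{n}(T^{m}\omega)|,
\]
which is exactly the subadditivity relation $S_{n+m}(\omega)\leq S_{m}(\omega)+S_{n}(T^{m}\omega)$ required by Theorem \ref{Kingman}.

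Finally I would check the remaining hypothesis and extract the constant. Since $S_{1}(\omega)=|g(\omega)|\geq0$, the integrability assumption $\int\max(S_{1},0)\,d\mathbb{P}=\int|g|\,d\mathbb{P}<\infty$ holds by hypothesis. Theorem \ref{Kingman} then yields, for $\mathbb{P}$-almost every $\omega$, the convergence $\frac{1}{n}|Z_{n}(\omega)|\to\inf_{n}\frac{1}{n}\int|Z_{n}|\,d\mathbb{P}=:\alpha$, which is formula (\ref{alpha}). Nonnegativity of $\alpha$ is immediate because each $S_{n}$ is nonnegative, so every average $\frac{1}{n}\int S_{n}\,d\mathbb{P}$ is nonnegative and hence so is their infimum.

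There is essentially no serious obstacle in this argument: the only points requiring care are the correct bookkeeping of indices in the cocycle identity so that it matches the exact form of subadditivity used in Theorem \ref{Kingman}, and the verification that the statement is independent of the choice of basepoint $x_{0}$. The latter follows because replacing $x_{0}$ by another point changes each $|Z_{n}(\omega)|$ by at most a fixed constant (by the triangle inequality and the 1-Lipschitz property), and such a bounded perturbation affects neither the almost-everywhere limit nor the value of $\alpha$.
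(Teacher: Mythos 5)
Your proposal is correct and follows essentially the same route as the paper: both establish subadditivity of $S_{n}(\omega)=|Z_{n}(\omega)|$ via the cocycle identity $Z_{n+m}(\omega)=Z_{m}(\omega)Z_{n}(T^{m}\omega)$, the triangle inequality through the intermediate point $Z_{m}(\omega)x_{0}$, and the $1$-Lipschitz property, then invoke Kingman's Theorem \ref{Kingman} to obtain the almost-sure limit together with formula (\ref{alpha}). Your added remarks on nonnegativity of $\alpha$ (immediate since $S_{n}\geq 0$, which also guarantees $\alpha>-\infty$ as Kingman requires) and on independence of the basepoint $x_{0}$ are harmless refinements of the same argument.
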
 \begin{proof} It suffices to observe that the
sequence $|Z_{n}(\omega)|$ satisfies the hypotheses of Theorem \ref{Kingman}.
Our hypothesis says that $\int Z_{1}<\infty$. The subaditivity follows
from the 1-Lipschitz property: \begin{align*}
|Z_{n+m}(\omega)|\; & =\; d(x_{0},g(\omega)...g(T^{n+m-1}\omega)x_{0})\\
 & \leq\; d(x_{0},g(\omega)...g(T^{m-1}\omega)x_{0})+\\
 & \;+d(g(\omega)...g(T^{m-1}\omega)x_{0},g(\omega)...g(T^{n+m-1}\omega)x_{0})\\
 & \leq\;|Z_{m}(\omega)|+d(x_{0},g(T^{m}\omega)...g(T^{n+m-1}\omega)x_{0})\\
 & =|Z_{m}(\omega)|+|Z_{n}(T^{m}\omega)|.\end{align*}
 Moreover we see that the limit $\alpha$ is given by $\inf_{n}\frac{1}{n}\int|Z_{n}(\omega)|d\mathbb{P}.$
\end{proof}

\
 When $\alpha>0$, Proposition \ref{drift} says that the points $Z_{n}(\omega)x$
go to infinity with a definite linear speed. The question arises of
the convergence in direction of the points $Z_{n}(\omega)x$. Given
equation (\ref{rayapprox}), we expect that an almost everywhere convergence
theorem will say that $Z_{n}(\omega)x$ will stay at a sublinear distance
of a geodesic. We present several results in that direction depending
on different geometric hypotheses on the space $X$.

Assume $X$ is a complete, Busemann nonpositively curved and uniformly
convex (e.g. $CAT(0)$ or uniformly convex Banach space) metric space.
Then,

\begin{theorem}\label{KM} \cite{KM} Under these assumptions, there
is a constant $\alpha\geq0$ and, for $\mathbb{P}$-almost every $\omega$,
a geodesic ray $\gamma_{\omega}$ such that\[
\frac{1}{n}d(Z_{n}(\omega)x_{0},\gamma_{\omega}(n\alpha))\rightarrow0\text{.}\]

\end{theorem}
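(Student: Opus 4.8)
The plan is to feed the subadditive cocycle $S_n(\omega):=|Z_n(\omega)|=d(x_0,Z_n(\omega)x_0)$ into Lemma \ref{key} and then translate its conclusion into convex geometry. By Proposition \ref{drift} this cocycle is integrable and satisfies $\inf_n\frac1n\int S_n\,d\mathbb{P}=\alpha\geq 0$, so the hypotheses of Lemma \ref{key} hold. If $\alpha=0$ there is nothing to do: any unit-speed ray $\gamma_\omega$ with $\gamma_\omega(0)=x_0$ works, since then $\gamma_\omega(n\alpha)=x_0$ and $\frac1n d(Z_n x_0,x_0)=\frac1n S_n\to 0$ by Proposition \ref{drift}. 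So I would assume $\alpha>0$ and fix $\varepsilon\in(0,\alpha)$.

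First I would record the geometric content of the lemma. The cocycle identity $Z_n(\omega)=Z_k(\omega)Z_{n-k}(T^k\omega)$ together with the $1$-Lipschitz property of $Z_k(\omega)$ gives, for all $k\leq n$,
\[
d\big(Z_k(\omega)x_0,\,Z_n(\omega)x_0\big)\leq d\big(x_0,\,Z_{n-k}(T^k\omega)x_0\big)=S_{n-k}(T^k\omega).
\]
Writing $(y\mid z)_{x_0}=\frac12\big(d(x_0,y)+d(x_0,z)-d(y,z)\big)$ for the Gromov product, Lemma \ref{key} then yields, for $\mathbb{P}$-a.e. $\omega$, a constant $K=K(\omega)$ and infinitely many instants $n$ with
\[
\big(Z_k(\omega)x_0\mid Z_n(\omega)x_0\big)_{x_0}\;\geq\;\tfrac12\big(S_k(\omega)+(\alpha-\varepsilon)k\big)\qquad(K\leq k\leq n).
\]
In words: along these privileged times the geodesic segments $[x_0,Z_n(\omega)x_0]$ point, to first order, in a common direction.

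Next I would convert this into convergence of directions. Taking two privileged times $n_i<n_j$ and $k=n_i$ bounds $(Z_{n_i}x_0\mid Z_{n_j}x_0)_{x_0}$ below by $\frac12\big(S_{n_i}+(\alpha-\varepsilon)n_i\big)$, a quantity comparable to $n_i$. Here the hypotheses on $X$ enter: in a complete, Busemann nonpositively curved, uniformly convex space the function $t\mapsto d(\gamma_1(t),\gamma_2(t))$ is convex along unit-speed geodesics issuing from $x_0$, and a large Gromov product of the endpoints forces the two segments to stay uniformly close on an initial arc of length comparable to that product. A standard equicontinuity-and-completeness argument then extracts a geodesic ray $\gamma_\varepsilon$ from $x_0$ that is the limit of the segments $[x_0,Z_{n_i}x_0]$.

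Finally I would prove the approximation for \emph{every} $n$. Applying the Gromov-product bound with $k=n$ (any $n\geq K$) and a privileged $n_j\geq n$, then letting $n_j\to\infty$, shows that $[x_0,Z_n x_0]$ shares with $\gamma_\varepsilon$ an initial arc of length $\approx\frac12\big(S_n+(\alpha-\varepsilon)n\big)$. Since the whole segment has length $S_n$, the endpoint $Z_n x_0$ lies within its remaining length $\approx\frac12\big(S_n-(\alpha-\varepsilon)n\big)$ of the ray; combining this with $S_n=\alpha n+o(n)$ from Proposition \ref{drift} gives $\limsup_n\frac1n d\big(Z_n x_0,\gamma_\varepsilon(\alpha n)\big)\leq C\varepsilon$. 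Letting $\varepsilon\to 0$ along a sequence, uniform convexity makes the $\gamma_\varepsilon$ converge to a single ray $\gamma_\omega$ for which the $\limsup$ is $0$. I expect the genuine difficulty to be exactly these two geometric steps: quantifying ``large Gromov product implies the geodesics stay close'' with a modulus supplied by uniform convexity (so the limit ray exists and the estimate is uniform in $n$), and the passage $\varepsilon\to 0$ yielding one ray valid for the true limit. All the ergodic input is carried by Lemma \ref{key}; what remains is convex geometry of nonpositively curved spaces.
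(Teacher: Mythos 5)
Your proposal is correct and follows essentially the same route as the paper's outline: both feed the subadditive cocycle $|Z_n(\omega)|=d(x_0,Z_n(\omega)x_0)$ into Lemma \ref{key} and then use Busemann nonpositive curvature plus uniform convexity to show that the triangles with vertices $x_0$, $Z_k(\omega)x_0$, $Z_n(\omega)x_0$ are thin at the privileged times $n$, extracting the limiting ray by a diagonal argument as $\varepsilon\to 0$ and deducing sublinear tracking. Your Gromov-product formulation is just a repackaging of the paper's side-length bookkeeping, and like the paper (which defers details to \cite{KM}, Section 5) you correctly isolate the quantitative thin-triangle lemma as the only substantive geometric step remaining.
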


We outline the proof (see \cite{KM}, Section 5, for details). Let
$a(n,\omega)=d(x_{0},Z_{n}(\omega)x_{0})$ for each $n$. Consider
a triangle consisting of $x_{0},$ $Z_{n}(\omega)x_{0}$, and $Z_{k}(\omega)x_{0}.$
Note that the side of this triangle have lengths $a(n,\omega),$ $a(k,\omega),$
and (at most) $a(n-k,T^{k}\omega).$ Given $\varepsilon>0$ (and a.e.
$\omega)$, for $k$ large it holds that $a(k,\omega)\leq(\alpha+\varepsilon)k$.
Assume now in addition to $k$ being large that $n$ and $k$ are
as in Lemma \ref{key}. This implies that the triangle is thin in
the sense that $Z_{k}(\omega)x_{0}$ lies close to the geodesic segment
$[x_{0},Z_{n}(\omega)x_{0}],$ more precisely, the distance is at
most $\delta(\varepsilon)a(k,\omega),$ where $\delta$ only depends
on the geometry. Thanks to the geometric assumptions this $\delta(\varepsilon)$
tends to $0$ as $\varepsilon$ tends to $0$. Selecting $\varepsilon$
tending to $0$ fast enough we can by selecting suitable $n$ as in
Lemma \ref{key} and some simple geometric arguments obtain a limiting
geodesic. Finally, one has essentially from the contruction that as
$m\rightarrow\infty,$ the points $Z_{m}(\omega)x_{0}$ lie at a sublinear
distance from this geodesic ray$.$

\

This note is devoted to the generalization of the ergodic theorem
to groups of isometries of a metric space $(X,d)$. We assume that
the space $(X,d)$ is \textit{proper} (closed bounded subsets are
compact) and we consider the \textit{metric compactification} of $X$.
Define, for $x\in X$ the function $\Phi_{x}(z)$ on $X$ by: \[
\Phi_{x}(z)\;=\; d(x,z)-d(x,x_{0}).\]
 The assignment $x\mapsto\Phi_{x}$ is continuous, injective and takes
values in a relatively compact set of functions for the topology of
uniform convergence on compact subsets of $X$. The \textit{metric
compactification}  $\overline{X}$ of $X$ is the closure of $X$
for that topology. The \textit{metric boundary}  $\partial X:=\overline{X}\setminus X$
is made of Lipschitz continuous functions $h$ on $X$ such that $h(x_{0})=0$.
Elements of $\partial X$ are called \textit{horofunctions}. Our main
result is the following \begin{theorem}\label{main}{[}Ergodic Theorem
for isometries \cite{KL1}{]} Let $T$ be a measure preserving transformation
of the Lebesgue probability space $(\Omega,\mathcal{A},\mathbb{P})$,
$G$ a locally compact group acting by isometries on a proper space
$X$ and $g:\Omega\to G$ a measurable map satisfying $\int|g(\omega)|d\mathbb{P}(\omega)<\infty.$
Then, for $\mathbb{P}$-almost every $\omega$, there is some $h_{\omega}\in\partial X$
such that: \[
\lim_{n\to\infty}-\frac{1}{n}h_{\omega}(Z_{n}(\omega)x_{0})\;=\;\lim_{n\to\infty}\frac{1}{n}d(x_{0},Z_{n}(\omega)x_{0}).\]
 \end{theorem}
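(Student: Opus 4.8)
The plan is to apply the subadditive machinery of Section~1 to the cocycle $a(n,\omega):=d(x_0,Z_n(\omega)x_0)$ and then to use compactness of $\overline X$ to extract the horofunction. First I would record that, by Proposition~\ref{drift}, the sequence $a(n,\cdot)$ is subadditive with $\int\max(a(1,\cdot),0)\,d\mathbb P=\int|g|\,d\mathbb P<\infty$ and $\alpha:=\inf_n\frac1n\int a(n,\cdot)\,d\mathbb P\geq 0>-\infty$; this $\alpha$ is exactly the right-hand limit in the statement. Hence Lemma~\ref{key} applies to $a(n,\cdot)$.

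Second, I would translate Lemma~\ref{key} into a statement about horofunctions. The cocycle relation $Z_n(\omega)=Z_k(\omega)Z_{n-k}(T^k\omega)$ together with the fact that $Z_k(\omega)$ acts isometrically gives $d(Z_k(\omega)x_0,Z_n(\omega)x_0)=a(n-k,T^k\omega)$, so
\[
a(n,\omega)-a(n-k,T^k\omega)=d(x_0,Z_n(\omega)x_0)-d(Z_k(\omega)x_0,Z_n(\omega)x_0)=-\Phi_{Z_n(\omega)x_0}(Z_k(\omega)x_0).
\]
Thus Lemma~\ref{key} says: for a.e.\ $\omega$ and every $\varepsilon>0$ there is $K(\omega,\varepsilon)$ and infinitely many $n$ with
\[
-\Phi_{Z_n(\omega)x_0}(Z_k(\omega)x_0)\ \geq\ (\alpha-\varepsilon)k\qquad\text{for all }K(\omega,\varepsilon)\leq k\leq n.
\]
Call such an $n$ \emph{good for $\varepsilon$}. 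Note also that every horofunction $h$ is $1$-Lipschitz with $h(x_0)=0$, so $-h(Z_n(\omega)x_0)\leq d(x_0,Z_n(\omega)x_0)$ and therefore $\limsup_n-\frac1n h(Z_n(\omega)x_0)\leq\alpha$ for free; the whole problem is to produce a single $h_\omega$ with the matching lower bound $\liminf_n-\frac1n h_\omega(Z_n(\omega)x_0)\geq\alpha$.

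Third, the extraction. Fix $\omega$ in the full-measure set where the above holds for every $\varepsilon=1/m$, $m\in\mathbb N$. Let $E_m:=\{\Phi_{Z_n(\omega)x_0}:n\geq m,\ n\text{ good for }1/m\}\subseteq\overline X$. Since a time good for $1/(m+1)$ is good for $1/m$, the closures $\overline{E_m}$ form a decreasing sequence of nonempty compact subsets of the compact space $\overline X$, so by Cantor's intersection theorem I may choose $h_\omega\in\bigcap_m\overline{E_m}$. When $\alpha>0$ the bound $d(x_0,Z_n(\omega)x_0)\geq(\alpha-1/m)n\to\infty$ along good times forces every such limit to lie in $\partial X$, so $h_\omega$ is a genuine horofunction (the case $\alpha=0$ being immediate once $\partial X\neq\emptyset$). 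Now fix $m$: because $h_\omega\in\overline{E_m}$ there are good-for-$1/m$ times $n_i\to\infty$ with $\Phi_{Z_{n_i}(\omega)x_0}\to h_\omega$ uniformly on compacta; for each fixed $k\geq K(\omega,1/m)$ the displayed inequality holds once $n_i\geq k$, and letting $i\to\infty$ gives $-h_\omega(Z_k(\omega)x_0)\geq(\alpha-1/m)k$ for all $k\geq K(\omega,1/m)$. Hence $\liminf_k-\frac1k h_\omega(Z_k(\omega)x_0)\geq\alpha-1/m$ for every $m$, i.e.\ $\geq\alpha$, which with the cheap upper bound yields the claimed limit.

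The main obstacle is precisely the step that makes me choose $h_\omega$ in the intersection $\bigcap_m\overline{E_m}$ rather than as a limit along good times for one fixed $\varepsilon$. The latter only yields a horofunction with rate $\alpha-\varepsilon$, and one cannot naively let $\varepsilon\to0$: the threshold $K(\omega,\varepsilon)$ above which the estimate holds diverges as $\varepsilon\to0$ (already on $\mathbb R$, where it is the rate of convergence of the Birkhoff averages), so for a fixed evaluation point $Z_k(\omega)x_0$ the relevant inequalities disappear in the limit $\varepsilon\to0$. Arranging that the \emph{same} $h_\omega$ be simultaneously approximable by good times of every scale $1/m$ is what defeats this order-of-limits problem, and it is the crux of the argument. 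A final routine point, which I would settle by a measurable-selection argument, is that $\omega\mapsto h_\omega$ can be taken measurable.
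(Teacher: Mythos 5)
Your reduction is sound as far as it goes: the identity $a(n,\omega)-a(n-k,T^{k}\omega)=-\Phi_{Z_{n}(\omega)x_{0}}(Z_{k}(\omega)x_{0})$ is correct, the cheap bound $\limsup_{n}-\frac{1}{n}h(Z_{n}(\omega)x_{0})\leq\alpha$ is correct, and converting Lemma \ref{key} into lower bounds along good times is a legitimate idea. The gap is the single sentence ``a time good for $1/(m+1)$ is good for $1/m$''. Goodness is not an intrinsic property of $n$: it is relative to the threshold $K(\omega,\varepsilon)$ supplied by Lemma \ref{key}, and that threshold depends on $\varepsilon$ and in general grows as $\varepsilon$ shrinks (as you yourself observe at the end). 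A time $n$ good for $1/(m+1)$ satisfies the stronger inequality only for $k\in[K(\omega,1/(m+1)),n]$; to be good for $1/m$ it would have to satisfy the inequality on the larger range $k\in[K(\omega,1/m),n]$, and nothing controls $-\Phi_{Z_{n}(\omega)x_{0}}(Z_{k}(\omega)x_{0})$ for $K(\omega,1/m)\leq k<K(\omega,1/(m+1))$. So $E_{m+1}\subseteq E_{m}$ fails in general; worse, even the finite intersection property of the $\overline{E_{m}}$ is unavailable, since a time simultaneously good for $1/1,\dots,1/M$ runs into the same threshold mismatch, Lemma \ref{key} being invoked for each $\varepsilon$ separately with no compatibility between the resulting infinite sets of times. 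Once nesting is gone, Cantor's theorem gives nothing, and every repair (a diagonal sequence $n_{m}$ of good-for-$1/m$ times, or limits $h_{m}\in\overline{E_{m}}$ taken one $m$ at a time) collapses for exactly the reason you articulate: for fixed $k$ the scale-$1/m$ inequality is only usable when $k\geq K(\omega,1/m)\to\infty$. The nested-closure device does not defeat the order-of-limits problem; it conceals it inside an unjustified monotonicity claim.

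This is a known, genuine obstruction rather than a fixable technicality: Lemma \ref{key} as stated is not strong enough to produce one $h_{\omega}$ working at all scales, and a pointwise argument of your type needs a strengthened cocycle lemma yielding times good at every scale simultaneously (such a strengthening was proved by Gou\"ezel and Karlsson only well after \cite{KL1}). The paper's proof takes a different route entirely, which is why it never meets your difficulty: Section 5 forms the skew product $\overline{T}(\omega,h)=(T\omega,g(\omega)^{-1}.h)$ on $\Omega\times\overline{X}$, checks the cocycle identity $\overline{F}_{n}(\omega,h)=-h(Z_{n}(\omega)x_{0})$ for $\overline{F}(\omega,h)=-h(g(\omega)x_{0})$, observes that $\int\overline{F}\,dm\leq\alpha$ for every $\overline{T}$-invariant $m$ projecting to $\mathbb{P}$, and constructs by weak{*} compactness (Ces\`aro averages of the graph measures $\omega\mapsto(\omega,\Phi_{Z_{n}(\omega)x_{0}})$) an invariant measure $m$ achieving $\int\overline{F}\,dm=\alpha$. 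Birkhoff's theorem applied to $\overline{F}$ then gives $-\frac{1}{n}h(Z_{n}(\omega)x_{0})\to\alpha$ for $m$-almost every $(\omega,h)$, with $h\in\partial X$ forced when $\alpha>0$, and $h_{\omega}$ is obtained as a measurable section of that full-measure set. Compactness enters there at the level of measures on $\Omega\times\overline{X}$, and the ergodic theorem performs the multi-scale bookkeeping that your $\bigcap_{m}\overline{E_{m}}$ was meant to do; that substitution is the real content of the proof and is the piece your proposal is missing.
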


\
 For the convenience of the reader, the proof of Theorem \ref{main}
is given in Section 5. We explain in Section 3 why the convergence
in Theorem \ref{main} is equivalent to the ray approximation under
the $CAT(0)$ assumption. Note that by Theorem \ref{main} the former
convergence holds for all norms on $\mathbb{R}^{d}$, but that Theorem
\ref{main} does not apply to infinite dimensional Banach spaces.
In this case, one can use Lemma \ref{key} to prove a noncommutative
ergodic theorem with linear functionals of norm $1,$ somewhat analogous
to horofunctions. Namely,

\begin{theorem}\label{Ka} \cite{Ka} Let $Z_{n}(\omega)$ be an
ergodic integrable cocycle of $1$-Lipschitz self-maps of a reflexive
Banach space. Then for $\mathbb{P}$-almost every $\omega$ there
is a linear functional $f_{\omega}$ of norm 1 such that\[
\lim_{n\rightarrow\infty}\frac{1}{n}f_{\omega}(Z_{n}(\omega)0)=\alpha.\]

\end{theorem}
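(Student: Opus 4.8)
The plan is to mimic the horofunction construction behind Theorem \ref{main}, but with horofunctions replaced by supporting linear functionals supplied by Hahn--Banach, extracting a limit functional through the weak compactness of the dual unit ball afforded by reflexivity. Write $B$ for the reflexive Banach space, $y_{n}:=Z_{n}(\omega)0$ and $a_{n}(\omega):=\|y_{n}\|=|Z_{n}(\omega)|$. Since the maps are $1$-Lipschitz, $a_{n}(\omega)$ is subadditive with integrable positive part, exactly as in Proposition \ref{drift}, so that Proposition gives $\frac{1}{n}a_{n}(\omega)\to\alpha:=\inf_{n}\frac{1}{n}\int|Z_{n}|\,d\mathbb{P}$ for a.e. $\omega$. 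If $\alpha=0$ the statement is immediate, since $|\frac{1}{n}f(y_{n})|\le\frac{1}{n}\|y_{n}\|\to0$ for every norm-$1$ functional $f$; so I would assume $\alpha>0$.

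First I would apply Lemma \ref{key} to $S_{n}=a_{n}(\omega)$. For a.e. $\omega$ and each $\varepsilon>0$ this yields $K=K(\omega)$ and infinitely many instants $n$ with $a_{n}(\omega)-a_{n-k}(T^{k}\omega)\ge(\alpha-\varepsilon)k$ for all $K\le k\le n$. The cocycle identity $y_{n}=Z_{k}(\omega)\bigl(Z_{n-k}(T^{k}\omega)0\bigr)$ together with the $1$-Lipschitz property of $Z_{k}(\omega)$ gives $\|y_{n}-y_{k}\|\le\|Z_{n-k}(T^{k}\omega)0\|=a_{n-k}(T^{k}\omega)$. For each such good instant $n$, choose by Hahn--Banach a norm-$1$ functional $f_{n}$ with $f_{n}(y_{n})=\|y_{n}\|=a_{n}(\omega)$. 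Then for $K\le k\le n$,
\[
f_{n}(y_{k})=f_{n}(y_{n})-f_{n}(y_{n}-y_{k})\ge a_{n}(\omega)-\|y_{n}-y_{k}\|\ge a_{n}(\omega)-a_{n-k}(T^{k}\omega)\ge(\alpha-\varepsilon)k.
\]

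Next I would pass to a single functional. Fix a sequence $\varepsilon_{j}\downarrow0$ and, for each $j$, a good instant $N_{j}\to\infty$ with its functional $f_{N_{j}}$; these lie in the unit ball of $B^{*}$, which is weakly compact because $B$ is reflexive, hence weakly sequentially compact by Eberlein--Smulian. Extract a weakly convergent subsequence $f_{N_{j_{l}}}\rightharpoonup f_{\omega}$, so $\|f_{\omega}\|\le1$. The decisive point is that the constant $K=K(\omega)$ in Lemma \ref{key} does not depend on $\varepsilon$: for any fixed $k\ge K(\omega)$ one has $k\le N_{j_{l}}$ for all large $l$, whence $f_{N_{j_{l}}}(y_{k})\ge(\alpha-\varepsilon_{j_{l}})k$, and letting $l\to\infty$ gives $f_{\omega}(y_{k})\ge\alpha k$. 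This is precisely the step that would fail for an $\varepsilon$-dependent cutoff, since a plain weak limit only controls $f_{\omega}$ at fixed evaluation points; combining the reflexivity (to have a weak limit at all) with the uniform $K$ so as to keep the lower bound is what I expect to be the main obstacle.

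Finally, the two estimates $\frac{1}{k}f_{\omega}(y_{k})\le\frac{1}{k}\|y_{k}\|\to\alpha$ and $f_{\omega}(y_{k})\ge\alpha k$ for $k\ge K(\omega)$ squeeze the ratio, giving $\lim_{n}\frac{1}{n}f_{\omega}(y_{n})=\alpha$. Moreover $\|f_{\omega}\|\ge f_{\omega}(y_{k})/\|y_{k}\|\ge\alpha k/a_{k}(\omega)\to1$, so in fact $\|f_{\omega}\|=1$, which completes the argument.
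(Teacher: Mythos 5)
Your architecture is exactly the intended one: good times from Lemma \ref{key} applied to $a_{n}(\omega)=\|Z_{n}(\omega)0\|$, Hahn--Banach support functionals $f_{n}$ at $y_{n}:=Z_{n}(\omega)0$, the estimate $f_{n}(y_{k})\geq a_{n}(\omega)-a_{n-k}(T^{k}\omega)\geq(\alpha-\varepsilon)k$, and weak compactness of the dual unit ball from reflexivity; these functionals are precisely the ``analogues of horofunctions'' alluded to before the theorem, and this is how \cite{Ka} proceeds. But the step you yourself single out as decisive is a genuine gap: Lemma \ref{key} does \emph{not} provide a threshold independent of $\varepsilon$. In its statement the existential quantifier on $K$ sits inside ``for all $\varepsilon>0$'', so the notation $K=K(\omega)$ merely suppresses an $\varepsilon$-dependence; in the source (\cite{KM}, Proposition 4.2) the constant is $K(\omega,\varepsilon)$, and nothing in its proof gives uniformity as $\varepsilon\downarrow0$. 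With $K_{j}:=K(\omega,\varepsilon_{j})$ possibly tending to infinity, for a fixed $k$ only finitely many $j$ satisfy $K_{j}\leq k$, so your weak limit $f_{\omega}$ of the $f_{N_{j}}$ inherits no lower bound at any fixed evaluation point $y_{k}$ --- a priori $f_{\omega}$ could even be $0$ --- and the final squeeze collapses. (The suspiciously clean conclusion $f_{\omega}(y_{k})\geq\alpha k$ for all $k\geq K(\omega)$ is a symptom: the unjustified uniformity is doing all the work.)

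Nor is there a cheap repair within your framework. Fixing $\varepsilon$ first and taking a weak limit along the infinitely many good times for that $\varepsilon$ does give $f^{\varepsilon}$ with $f^{\varepsilon}(y_{k})\geq(\alpha-\varepsilon)k$ for all $k\geq K(\omega,\varepsilon)$, since each such constraint is a weakly closed half-space; but merging over $\varepsilon$ then hits the same interchange: the sets $\{f:\|f\|\leq1,\ f(y_{k})\geq(\alpha-\varepsilon)k\ \forall k\geq K(\omega,\varepsilon)\}$ are weakly compact and convex but not nested, a weak limit of the $f^{\varepsilon_{j}}$ again loses the bound at every fixed $k$, and convex combinations only yield $\liminf_{k}\frac{1}{k}f(y_{k})\geq\alpha-\sum_{j}c_{j}\varepsilon_{j}$, never $\alpha$ itself. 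The known correct fix is a strengthened form of Lemma \ref{key} producing a \emph{single} sequence of good times $n_{i}$ with errors $\delta_{k}\to0$ depending on $k$ only, i.e. $a_{n_{i}}(\omega)-a_{n_{i}-k}(T^{k}\omega)\geq(\alpha-\delta_{k})k$ for all $1\leq k\leq n_{i}$; with that in hand your weak-limit argument goes through verbatim, each fixed $k$ giving $f_{\omega}(y_{k})\geq(\alpha-\delta_{k})k$. Such a uniform lemma is genuinely harder than Lemma \ref{key} and was established later by Gou\"ezel and Karlsson precisely to close this kind of limit interchange. So you correctly located the crux, but asserting the uniformity instead of proving it leaves the proof incomplete; the remaining steps (the $\alpha=0$ case, subadditivity, the cocycle estimate $\|y_{n}-y_{k}\|\leq a_{n-k}(T^{k}\omega)$, Eberlein--Smulian, and the norm computation $\|f_{\omega}\|=1$) are all fine.
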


On the other hand, Kohlberg-Neyman \cite{KN} found a counterexample
to the norm convergence, or more precisely to (\ref{rayapprox}),
for general Banach spaces.

\

\section{Case when $X$ is a $CAT(0)$ proper space.}

When the space $(X,d)$ is a proper $CAT(0)$ metric space, both Theorems
\ref{KM} and \ref{main} apply. Because it is a direct generalization
of the important case when $G$ is a linear group, it is often called
the Oseledets Theorem. In this section we explain how to recover the
ray approximation and other more familiar forms of Oseledets Theorem
from Theorem \ref{main}. Many of the geometric ideas in this section
go back to Kaimanovich's extension of Oseledets Theorem to more general
semi-simple groups (\cite{K3}).

A metric geodesic space $(X,d)$ is called a $CAT(0)$ space if its
geodesic triangles are thinner than in the Euclidean space. Namely,
consider four points $A,B,C,D\in X$, $D$ lying on a length minimizing
geodesic going from $B$ to $C$. Draw four points $A',B',C',D'$
in the Euclidean plane with $AB=A'B',BD=B'D',DC=D'C',CA=C'A'.$ The
space is called $CAT(0)$ if, for any such configuration $AD\leq A'D'$.
Simply connected Riemannian spaces with nonpositive curvature, locally
finite trees and Euclidean buildings are proper $CAT(0)$ spaces.
If $X$ is a $CAT(0)$ space, then the horofunctions $h\in\partial X$
are called Busemann functions, and for any $h\in\partial X$, there
is a unique geodesic ray $\sigma_{h}(t),t\geq0$ such that $\sigma_{h}(0)=x_{0}$
and $\lim_{t\to\infty}\Phi_{\sigma_{h}(t)}=h.$ We have:

\begin{corollary}\label{ray} Assume moreover that $X$ is a $CAT(0)$
space and that $\alpha>0$. Then, for $\mathbb{P}$-almost every $\omega$,
as $n$ goes to $\infty$, \[
\lim_{n}\frac{1}{n}d\left(Z_{n}(\omega)x_{0},\sigma_{h_{\omega}}(\alpha n)\right)\;=\;0,\]
 where $h_{\omega}$ is given by Theorem \ref{main}. \end{corollary}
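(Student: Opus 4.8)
The plan is to start from the conclusion of Theorem~\ref{main}, which in the $CAT(0)$ setting provides, for $\mathbb{P}$-almost every $\omega$, a Busemann function $h_\omega\in\partial X$ together with its associated geodesic ray $\sigma_{h_\omega}$ emanating from $x_0$, satisfying
\[
-\frac{1}{n}h_\omega(Z_n(\omega)x_0)\;\longrightarrow\;\alpha\qquad\text{and}\qquad \frac{1}{n}d(x_0,Z_n(\omega)x_0)\;\longrightarrow\;\alpha.
\]
The goal is to upgrade this ``directional'' convergence into genuine ray approximation, namely that $Z_n(\omega)x_0$ stays at sublinear distance from the point $\sigma_{h_\omega}(\alpha n)$ on the ray. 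The core idea is that in $CAT(0)$ geometry a Busemann function measures displacement along its ray essentially optimally, so if a point realizes the Busemann value and the distance to $x_0$ simultaneously (up to $o(n)$), it must lie near the ray.

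First I would set $x_n:=Z_n(\omega)x_0$, $a_n:=d(x_0,x_n)$, and $b_n:=-h_\omega(x_n)$, recording that both $a_n/n\to\alpha$ and $b_n/n\to\alpha$. Next I would project $x_n$ onto the ray: let $p_n:=\sigma_{h_\omega}(t_n)$ be the nearest-point projection of $x_n$ onto the (convex) geodesic ray, which exists and is unique in a proper $CAT(0)$ space. The key quantitative input is the $CAT(0)$ comparison inequality relating the three quantities $a_n$, the Busemann value $b_n=t_n - (\text{error})$, and the transverse distance $d(x_n,p_n)$. Concretely, because $\sigma_{h_\omega}$ is the gradient ray of $h_\omega$, one has $h_\omega(\sigma_{h_\omega}(t))=-t$, and for any point $x$ the comparison with a Euclidean right triangle gives a bound of the shape
\[
d(x,x_0)^2 \;\geq\; \big(-h_\omega(x)\big)^2 + d(x,p)^2 - o\big(d(x,x_0)^2\big),
\]
where $p$ is the projection. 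The plan is to make this precise using the convexity of the Busemann function and the Pythagorean-type comparison available in $CAT(0)$ spaces.

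The heart of the argument is then a squeeze: from $a_n^2 \geq b_n^2 + d(x_n,p_n)^2$ (up to lower-order terms) and $a_n,b_n$ both asymptotic to $\alpha n$, I would conclude $d(x_n,p_n)^2 \leq a_n^2 - b_n^2 = (a_n-b_n)(a_n+b_n)$, and since $a_n-b_n=o(n)$ while $a_n+b_n=O(n)$, this forces $d(x_n,p_n)=o(n)$. It then remains to compare $p_n=\sigma_{h_\omega}(t_n)$ with the target $\sigma_{h_\omega}(\alpha n)$; since $b_n$ approximates the Busemann value at $p_n$ which is $-t_n$ up to the transverse error, one gets $t_n/n\to\alpha$, hence $d\big(p_n,\sigma_{h_\omega}(\alpha n)\big)=|t_n-\alpha n|=o(n)$. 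Combining the two estimates through the triangle inequality yields $d(x_n,\sigma_{h_\omega}(\alpha n))=o(n)$, which is exactly the claim.

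The step I expect to be the main obstacle is establishing the quantitative Pythagorean comparison in the correct asymptotic form. In a $CAT(0)$ space the Busemann function is $1$-Lipschitz and convex, and $\sigma_{h_\omega}$ is the unit-speed ray along which $h_\omega$ decreases maximally, but extracting a clean inequality of the form $d(x,x_0)^2\gtrsim h_\omega(x)^2+d(x,p)^2$ requires care: one must pass to the limit in the defining horofunction approximation $\Phi_{\sigma_{h_\omega}(t)}\to h_\omega$ and control the comparison triangles uniformly as $t\to\infty$. The likely route is to work first with the finite-stage approximants $\Phi_{\sigma_{h_\omega}(t)}$, apply the $CAT(0)$ comparison inequality (the very inequality $AD\leq A'D'$ from the definition) to the triangle with vertices $x_0$, $x_n$, and $\sigma_{h_\omega}(t)$ with foot $p_n$, and then let $t\to\infty$. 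Handling the $o(n)$ error terms so that they do not accumulate under this double limit is the delicate bookkeeping, but the geometry is forgiving because all estimates are only needed to first order in $n$.
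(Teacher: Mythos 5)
Your proposal is correct and follows essentially the same route as the paper's proof: both take $a_n=d(x_0,Z_n(\omega)x_0)\sim\alpha n$ and $b_n=-h_\omega(Z_n(\omega)x_0)\sim\alpha n$ from Theorem \ref{main}, apply the $CAT(0)$ comparison inequality to the triangle with vertices $x_0$, $Z_n(\omega)x_0$, $\sigma_{h_\omega}(t)$, let $t\to\infty$ to pass from $\Phi_{\sigma_{h_\omega}(t)}$ to $h_\omega$, and conclude by the Pythagorean squeeze $a_n^2-b_n^2=(a_n-b_n)(a_n+b_n)=o(n^2)$. The only difference is cosmetic: you route the estimate through the nearest-point projection $p_n$ onto the ray plus a triangle inequality, whereas the paper bounds $d(Z_n(\omega)x_0,\sigma_{h_\omega}(\alpha n))$ in one step via the comparison points $E'_\infty$ and $D'$, absorbing the term $|b_n-\alpha n|$ into the same comparison figure.
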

\begin{proof} Consider a geodesic triangle $A=Z_{n}(\omega)x_{0},B=x_{0},C_{t}=\sigma_{h_{\omega}}(t)$,
for $t$ very large, and choose $D=\sigma_{h_{\omega}}(n\alpha)$.
We want to estimate the distance $AD$. We have \begin{align*}
AB & =d(Z_{n}(\omega)x_{0},x_{0})=|Z_{n}(\omega)|=:n\alpha_{n}(\omega)\\
BC_{t} & =t,\; BD=n\alpha\textrm{ and }\\
C_{t}A & =t+\Phi_{\sigma_{h_{\omega}}(t)}(Z_{n}(\omega)x_{0})=:t-n\beta_{n}(\omega)+o_{n}(t).\end{align*}
 For almost every $\omega$, we have 
\begin{itemize}
\item $\lim_{n}\alpha_{n}(\omega)=\alpha$ by Theorem \ref{Kingman}, 
\item $\lim_{n}\beta_{n}(\omega)=\lim_{n}-\frac{1}{n}h_{\omega}(Z_{n}(\omega)x_{0})=\alpha$
by Theorem \ref{main} and 
\item for a fixed $n$, $\lim_{t\to\infty}o_{n}(t)=h_{\omega}(Z_{n}(\omega)x_{0})-h_{\sigma_{h_{\omega}}(t)}(Z_{n}(\omega)x_{0})=0.$ 
\end{itemize}
Construct the comparison figure $A'B'C'_{t}D'$, and let $t$ go to
$\infty$. The point $E'_{t}$ of $B'C'_{t}$ at the same distance
from $C'_{t}$ than $A'$ converges to the orthogonal projection $E'_{\infty}$
of $A'$ on $B'C'_{t}$ and satisfies $B'E'_{t}=n\beta_{n}-o_{n}(t)$.
Therefore, $B'E'_{\infty}=n\beta_{n}$. We have: \[
(AE'_{\infty})^{2}=n^{2}(\alpha_{n}^{2}-\beta_{n}^{2}),\;\;(D'E'_{\infty})^{2}=n^{2}(\beta_{n}-\alpha)^{2},\]
 and therefore, as $n\to\infty$: \[
\lim_{n}\frac{1}{n^{2}}(A'D')^{2}\;=\;\lim_{n}\left((\alpha_{n}^{2}-\beta_{n}^{2})+(\beta_{n}-\alpha)^{2}\right)=0.\]
 \end{proof}

\begin{corollary}\label{conv} With the same assumptions, we have,
for $\mathbb{P}$-almost every $\omega$, $Z_{n}(\omega)x_{0}$ converges
to $h_{\omega}$ in $\overline{X}$. \end{corollary}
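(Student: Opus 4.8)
The plan is to show that the point $Z_n(\omega)x_0$ converges to $h_\omega$ in the metric compactification $\overline{X}$, that is, that $\Phi_{Z_n(\omega)x_0}\to h_\omega$ uniformly on compact subsets of $X$. By the compactness of $\overline{X}$, it suffices to prove that $h_\omega$ is the \emph{only} possible limit point of the sequence $\Phi_{Z_n(\omega)x_0}$; convergence then follows automatically. So first I would fix $\omega$ in the full-measure set on which both Theorem \ref{main} and Corollary \ref{ray} hold, pass to an arbitrary subsequence along which $\Phi_{Z_{n_j}(\omega)x_0}$ converges to some horofunction $h\in\partial X$ (it lies in $\partial X$, not in $X$, because $|Z_n(\omega)|\to\infty$ when $\alpha>0$), and aim to identify $h$ with $h_\omega$.

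The key input is Corollary \ref{ray}, which tells us that $Z_n(\omega)x_0$ tracks the geodesic ray $\sigma_{h_\omega}$ at sublinear distance. Concretely, writing $A_n=Z_n(\omega)x_0$, we have $d(A_n,\sigma_{h_\omega}(\alpha n))=o(n)$. The idea is that a point lying sublinearly close to the ray $\sigma_{h_\omega}$ and escaping to infinity along it must have horofunction equal to $h_\omega$. To make this precise I would evaluate the candidate horofunction at a fixed test point $z\in X$: since horofunctions are $1$-Lipschitz,
\[
\bigl|\Phi_{A_n}(z)-\Phi_{\sigma_{h_\omega}(\alpha n)}(z)\bigr|\;\le\;2\,d\bigl(A_n,\sigma_{h_\omega}(\alpha n)\bigr),
\]
and by Corollary \ref{ray} the right-hand side is $o(n)$, which does tend to infinity, so this crude Lipschitz bound is not by itself enough and must be sharpened. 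The main obstacle is exactly this gap: sublinear tracking controls distances only up to $o(n)$, whereas horofunction values are $O(1)$ quantities, so a direct estimate loses too much.

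To close that gap I would use the $CAT(0)$ convexity of the distance function rather than bare Lipschitz continuity. In a $CAT(0)$ space, for a fixed base point $z$ the Busemann function of a ray is the decreasing limit of the normalized distances $d(z,\sigma_{h_\omega}(t))-t$, and convexity gives uniform control of how $\Phi_{\sigma_{h_\omega}(t)}(z)$ approaches $h_\omega(z)$. The point is that once $A_n$ is within sublinear distance of $\sigma_{h_\omega}(\alpha n)$ while both march off to infinity in the same direction, the comparison-triangle argument already used in the proof of Corollary \ref{ray} shows that the projection of $A_n$ onto any fixed ray through $x_0$ stabilizes; applying this to the rays from $x_0$ through the test points forces $\Phi_{A_n}(z)\to h_\omega(z)$ for each fixed $z$. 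Since $\overline{X}$ carries the topology of uniform convergence on compacta and all these functions are equi-Lipschitz, pointwise convergence upgrades to the required convergence, so $h=h_\omega$. As the subsequence was arbitrary, $\Phi_{Z_n(\omega)x_0}\to h_\omega$ and the corollary follows.
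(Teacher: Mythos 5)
Your skeleton --- reduce by compactness of $\overline{X}$ to identifying an arbitrary subsequential limit $h\in\partial X$ of $\Phi_{Z_{n_j}(\omega)x_0}$ with $h_\omega$, using the sublinear tracking from Corollary \ref{ray} --- is sound, and you correctly diagnose that the naive Lipschitz bound gives only an $o(n)$ error where an $o(1)$ statement is needed. But the step you offer to close this gap is not an argument, and the mechanism you name is false. First, ``convexity controls how $\Phi_{\sigma_{h_\omega}(t)}(z)$ approaches $h_\omega(z)$'' addresses the wrong gap: the loss occurs between $\Phi_{A_n}$ and $\Phi_{\sigma_{h_\omega}(\alpha n)}$, not between $\Phi_{\sigma_{h_\omega}(t)}$ and $h_\omega$. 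Second, ``the projection of $A_n$ onto any fixed ray through $x_0$ stabilizes'' fails already in the Euclidean plane: take $\sigma_{h_\omega}$ the positive $x$-axis and $A_n=(n,\sqrt n)$, so that $d(A_n,\sigma_{h_\omega}(n))=\sqrt{n}=o(n)$; the projection of $A_n$ onto the ray through the test point $z=(0,1)$ is $(0,\sqrt n)$, which escapes to infinity --- even though $\Phi_{A_n}(z)\to 0=h_\omega(z)$ does hold. So the quantity you propose to track neither converges nor, where it does, computes $\Phi_{A_n}(z)$. What convexity actually buys is convergence of the geodesics: with $c_n$ the geodesic from $x_0$ to $A_n$ and $L_n:=d(x_0,A_n)=\alpha n+o(n)$, comparing $c_n$ (parametrized proportionally) with the segment from $x_0$ to $\sigma_{h_\omega}(\alpha n)$ gives, for fixed $t$, $d\bigl(c_n(t),\sigma_{h_\omega}(t\alpha n/L_n)\bigr)\le (t/L_n)\,d\bigl(A_n,\sigma_{h_\omega}(\alpha n)\bigr)\to 0$, hence $c_n\to\sigma_{h_\omega}$ uniformly on compacta. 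This yields the sharp evaluation along the ray: $h(\sigma_{h_\omega}(t))\le\lim_j\bigl(d(A_{n_j},c_{n_j}(t))+d(c_{n_j}(t),\sigma_{h_\omega}(t))-L_{n_j}\bigr)=-t$, while $h(\sigma_{h_\omega}(t))\ge -t$ since $h$ is $1$-Lipschitz with $h(x_0)=0$. Even then a final rigidity step is needed: in a proper $CAT(0)$ space $h$ is the Busemann function $b_\tau$ of some ray $\tau$ from $x_0$, and $b_\tau(\sigma_{h_\omega}(t))=-t$ for all $t$ makes the triangles with vertices $x_0$, $\sigma_{h_\omega}(t)$, $\tau(s)$ degenerate as $s\to\infty$, forcing $\tau=\sigma_{h_\omega}$ by thinness; equivalently one can invoke the identification of the cone and horofunction compactifications of a proper $CAT(0)$ space (see \cite{BH}).

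For comparison, the paper's proof follows the same two-stage route but settles the second stage differently: it first notes, exactly as above, that the geodesics joining $x_0$ to $Z_n(\omega)x_0$ converge to $\sigma_{h_\omega}$; then, instead of evaluating limit horofunctions along the ray, it reruns the argument of Corollary \ref{ray} from the basepoints $\sigma_{h_\omega}(K)$, concluding that every accumulation point of $Z_n(\omega)x_0$ in $\overline X$ lies in the set seen from $\sigma_{h_\omega}(K)$ in the direction of $h_\omega$, for every $K$, and that the intersection of these sets over $K$ is the single point $h_\omega$. Either identification would complete your proof; as written, your sketch stops precisely where the real work begins.
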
 In particular,
when $\alpha>0$ and $X$ is proper $CAT(0)$, the direction $h_{\omega}$
given by Theorem \ref{main} is unique.

\begin{proof} In the above triangle, the geodesic $\sigma_{n}$ joining
$x_{0}$ to $Z_{n}(\omega)x_{0}$ converges to $\sigma_{h_{\omega}}$.
Therefore all the accumulation points of $Z_{n}(\omega)x_{0}$ belong
to the set seen from $x_{0}$ in the direction of $h_{\omega}$. By
the same proof, all the accumulation points of $Z_{n}(\omega)x_{0}$
belong to the set seen from $\sigma_{h_{\omega}}(K)$ in the direction
of $h_{\omega}$, for all $K$. As $K$ goes to infinity, the intersection
of those sets is reduced to the point $h_{\omega}$. \end{proof}

\

In the case when $G$ is a linear group, Corollary \ref{ray} is closely
related to the well known \begin{theorem}\label{OMET}{[}Oseledets
Multiplicative Ergodic Theorem, \cite{O}, 1968{]} Let $T$ be an
ergodic transformation of the Probability space $(\Omega,\mathcal{A},\mathbb{P})$,
and $A:\Omega\to GL(d,\mathbb{R})$ a measurable map such that $\int\max\{\ln||A||,\ln||A^{-1}||\}d\mathbb{P}<\infty$.
Then there exist 
\begin{itemize}
\item real numbers $\lambda_{1}\leq\lambda_{2}\leq\dots\leq\lambda_{k}$ 
\item integers $m_{i},i=1,\dots,k$ with $\sum_{i}m_{i}=d$, $\sum_{i}\lambda_{i}m_{i}=\int\ln|DetA|d\mathbb{P}.$ 
\item for $\mathbb{P}$-almost every $\omega$, a flag of subspaces of $\mathbb{R}^{d}$
\[
\{0\}=V_{k+1}(\omega)\subset V_{k}(\omega)\subset\dots\subset V_{1}(\omega)=\mathbb{R}^{d}\]
 
\end{itemize}
with, for all $i,1\leq i\leq k,$ $DimV_{i}=\sum_{j\geq i}m_{j}$
and a vector $v$ belongs to $V_{i}(\omega)\setminus V_{i+1}(\omega)$
if, and only if, as $n$ goes to $\infty$, \[
\lim\frac{1}{n}\ln||A(T^{n-1}\omega)A(T^{n-2}\omega)\dots A(\omega)v||=\lambda_{i}.\]
 \end{theorem}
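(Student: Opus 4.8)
The plan is to derive Oseledets' theorem from Corollary \ref{ray} by specializing to the symmetric space $X = GL(d,\mathbb{R})/O(d)$ of positive-definite symmetric matrices, which is a proper $CAT(0)$ space on which $GL(d,\mathbb{R})$ acts by isometries via $g\cdot P = gPg^{t}$. With basepoint $x_{0} = I$, the displacement $|g| = d(I, gg^{t})$ is comparable to $\max\{\ln\|g\|, \ln\|g^{-1}\|\}$, so the integrability hypothesis of Oseledets matches that of Theorem \ref{main}. The idea is that the singular value decomposition of the product $Z_{n}(\omega) = A(T^{n-1}\omega)\cdots A(\omega)$ encodes exactly the Lyapunov data: writing $Z_{n} = K_{n} D_{n} L_{n}$ in $KAK$ form, the logarithms of the diagonal entries of $D_{n}$, divided by $n$, should converge, and the limiting geodesic ray $\sigma_{h_{\omega}}$ supplied by Corollary \ref{ray} should determine both the Lyapunov exponents (as the speed of the ray along the different flat directions) and the flag structure.

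First I would set up the dictionary between the geometry of $X$ and the linear-algebraic data. The geodesic rays from $I$ are of the form $t \mapsto k\,\mathrm{diag}(e^{t\mu_{1}}, \ldots, e^{t\mu_{d}})\,k^{t}$ for $k \in O(d)$ and a unit vector $\mu$, and the Lyapunov exponents $\lambda_{i}$ are read off from the components $\mu_{j}$ of the direction vector of $\sigma_{h_{\omega}}$ (rescaled by $\alpha$). Next I would invoke Corollary \ref{ray}, which gives, for a.e. $\omega$, that $Z_{n}(\omega)x_{0}$ tracks the ray $\sigma_{h_{\omega}}(\alpha n)$ at sublinear distance; translating this back through the distance formula on $X$ yields the convergence $\frac{1}{n}\ln(\text{singular values of } Z_{n}) \to \lambda_{i}$ with the correct multiplicities $m_{i}$. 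The flag $V_{i}(\omega)$ would then be constructed from the subspaces spanned by the singular directions of $Z_{n}$ that converge; the characterization of $v \in V_{i}\setminus V_{i+1}$ via the growth rate of $\|Z_{n}v\|$ follows from the sublinear tracking once one checks that the singular directions themselves converge.

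The main obstacle I expect is precisely the convergence of the \emph{directions} (the flag), not merely the exponents. Corollary \ref{ray} controls the distance $d(Z_{n}(\omega)x_{0}, \sigma_{h_{\omega}}(\alpha n))$, which pins down the radial behavior and hence the exponents, but extracting a well-defined limiting flag requires controlling the angular/$K$-part of the decomposition, and this is delicate when some Lyapunov exponents coincide (the multiplicity-$m_{i}$ eigenspaces) since within a degenerate block only the block, not an individual line, is determined. The resolution is to use the uniqueness of the horofunction $h_{\omega}$ and the convergence $Z_{n}(\omega)x_{0} \to h_{\omega}$ from Corollary \ref{conv}: the horofunction in $X$ records exactly the flag together with the exponents, so once one identifies the boundary point $h_{\omega}$ with a pair (flag, spectrum) via the structure of $\partial X$ for symmetric spaces, the whole of Theorem \ref{OMET} drops out. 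I would also need to handle the possibility $\alpha = 0$ separately (where Corollary \ref{ray} does not directly apply but all exponents are forced to be equal), and to relate the geometric displacement rate $\alpha$ to the trace/determinant normalization $\sum_{i}\lambda_{i}m_{i} = \int \ln|\det A|\,d\mathbb{P}$, which follows from Birkhoff's theorem applied to $\omega \mapsto \ln|\det A(\omega)|$.
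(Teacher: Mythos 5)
Your proposal follows, in outline, exactly the paper's own derivation: the paper likewise specializes Theorem \ref{main} and Corollary \ref{ray} to the symmetric space $GL(d,\mathbb{R})/O(d,\mathbb{R})$ with $|g|=\sqrt{\sum_{i}(\ln\tau_{i})^{2}}$, treats $\alpha=0$ separately (all exponents then vanish), reads the exponents and the flag off the limiting geodesic $e^{tH(\omega)}$ supplied by Corollary \ref{ray}, and obtains $\sum_{i}\lambda_{i}m_{i}=\int\ln|\mathrm{Det}A|\,d\mathbb{P}$ from the determinant. But there are two concrete gaps. First, you identify the Oseledets product $A^{(n)}(\omega)=A(T^{n-1}\omega)\cdots A(\omega)$ with $Z_{n}(\omega)$; however $Z_{n}(\omega)=g(\omega)g(T\omega)\cdots g(T^{n-1}\omega)$ has the factors in the opposite order, so $A^{(n)}$ is not a cocycle of the form covered by Theorem \ref{main}. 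The paper's fix is to set $g(\omega)=A^{t}(\omega)$, whence $A^{(n)}=(Z_{n})^{t}$, and this is more than bookkeeping: with this choice $Z_{n}x_{0}$ corresponds to $(A^{(n)})^{t}A^{(n)}=K_{n}^{t}\Delta_{n}^{2}K_{n}$, so the orbit tracked by Corollary \ref{ray} records precisely the right singular flag $K_{n}$ of the Cartan decomposition $A^{(n)}=L_{n}\Delta_{n}K_{n}$, which is the one governing $\|A^{(n)}v\|$; your convention would control the left flag $L_{n}$ instead, which is useless for the stated growth characterization.

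Second, where you anticipate having to prove convergence of the singular directions (and correctly observe that this is delicate inside degenerate blocks), the paper bypasses the issue entirely, and your proposed resolution does not close the gap. The flag $V_{i}(\omega)$ is \emph{defined} from the eigenspaces of $H(\omega)$, and the growth statement is proved without any convergence of $K_{n}$: writing $\exp(H(\omega))=K(\omega)^{t}\Delta K(\omega)$, the sublinear tracking of Corollary \ref{ray} says exactly that $\frac{1}{n}\ln$ of the norm and of the inverse norm of $\Delta_{n}(\omega)K_{n}(\omega)K^{t}(\omega)\exp(-n\Delta)$ tend to $0$; this yields the entrywise bounds $\big|k_{i,j}^{(n)}(\omega)\big|\le e^{n(\mu_{j}-\delta_{i}^{(n)}+\varepsilon)}$, hence $\|A^{(n)}e_{j}\|\le e^{n(\mu_{j}+\varepsilon)}$ in a basis adapted to the flag, and combined with the convergence of $\frac{1}{n}\ln|\mathrm{Det}A^{(n)}|$ this forces the exact limits, multiplicities included. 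Your alternative --- identifying $h_{\omega}$ with a (flag, spectrum) pair in $\partial X$ via Corollary \ref{conv} and the uniqueness of $h_{\omega}$ --- only identifies the limiting boundary point; it gives no quantitative control of $\|A^{(n)}v\|$ for individual vectors, so you would still need the entrywise estimate above (or an equivalent) to finish.
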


Observe that, automatically, the $V_{i}$ depend measurably of $\omega$
and are invariant in the sense that $A(\omega)V_{i}(\omega)=V_{i}(T\omega)$.
The usual complete form of Oseledets Theorem follows by comparing
the results of Theorem \ref{OMET} for $(T,A)$ and for $(T^{-1},A^{-1}\circ T^{-1})$.
Fix $\omega\in\Omega$, and let $e_{i},i=1,\dots,d$ be an orthogonal
base of $\mathbb{R}^{d}$ such that $e_{\ell}\in V_{i}(\omega)$ as
soon as $\ell\leq\sum_{j\geq i}m_{j}$. Write $\mu_{1}\geq\dots\geq\mu_{d}$
for the exponents $\lambda_{j}$, each counted with multiplicity $m_{j}$,
and consider $A^{(n)}(\omega):=A(T^{n-1}\omega)A(T^{n-2}\omega)\dots A(\omega)$
in the base $(e_{i})$. To verify the statement of Theorem \ref{OMET},
it suffices to show that for all $\varepsilon>0$ and for $n$ large
enough, \[
\big|A_{i,j}^{(n)}(\omega)\big|\leq e^{n(\mu_{i}+\varepsilon)}\;\textrm{ and }\big|\ln|DetA^{(n)}(\omega)|-\sum_{j}\mu_{j}\big|\leq\varepsilon.\]

With the notations of Section 2, consider the action by isometries
of $GL(d,\mathbb{R})$ on the symmetric space $GL(d,\mathbb{R})/O(d,\mathbb{R})$
with origin $x_{0}=O(d,\mathbb{R})$ and distance $|g|=\sqrt{\sum_{j=1}^{d}(\ln\tau_{i})^{2}}$,
where $\tau_{i}$ are the eigenvalues of $gg^{t}$. It is a $CAT(0)$
geodesic proper space. Set $g(\omega)=A^{t}(\omega)$. The moment
hypothesis $\int|g|d\mathbb{P}<\infty$ is satisfied. We have $A^{(n)}(\omega)=(Z_{n}(\omega))^{t}.$
If $\alpha=0$, then the eigenvalues of $Z_{n}Z_{n}^{t}$ grow subexponentially
and \[
\lim_{n}\frac{1}{n}\ln||A^{(n)}(\omega)v||=\frac{1}{2}\lim_{n}\frac{1}{n}\ln(||Z_{n}^{t}v||^{2})=0.\]
 In this case $m_{1}=d,\lambda_{1}=0$ and Theorem \ref{OMET} holds.
We may assume $\alpha>0$, and apply Corollary \ref{ray}.

Geodesics starting from the origin are of the form $e^{tH}$, where
$H$ is a nonzero symmetric matrix. Therefore, for $\mathbb{P}$-almost
every $\omega$, there is a nonzero symmetric matrix $H(\omega)$
such that $\frac{1}{n}d(exp(nH(\omega)),Z_{n}(\omega))$ goes to $0$
as $n\to\infty$ (the constant $\alpha$ has been incorporated in
$H$). In other words, $\frac{1}{n}\ln$ of the norm, and of the norm
of the inverse, of the matrix $exp(-nH(\omega))(A^{(n)}(\omega))^{t}$
go to $0$ as $n\to\infty.$ We claim that this gives the conclusion
of Theorem \ref{OMET} with $\lambda_{i}$ the eigenvalues of $H(\omega)$,
$m_{i}$ their respective multiplicities and $V_{i}$ the sums of
the eigenspaces corresponding to eigenvalues smaller than $\lambda_{i}$.
Indeed, we write $exp(H(\omega))=K(\omega)^{t}\Delta K(\omega)$ for
$K$ an orthogonal matrix and $\Delta$ a diagonal matrix with diagonal
entries $e^{\mu_{i}}$, and $A^{(n)}(\omega)=L_{n}(\omega)\Delta_{n}(\omega)K_{n}(\omega)$
a Cartan decomposition of $A^{(n)}$ with $L_{n},K_{n}$ orthogonal,
$\Delta_{n}$ a diagonal matrix with nonincreasing diagonal entries
$exp(n\delta_{1}^{(n)}(\omega))\geq\dots\geq exp(n\delta_{d}^{(n)}(\omega))$.
The conclusion of Corollary \ref{ray} is therefore that, for $\mathbb{P}$-almost
every $\omega$, $\frac{1}{n}\ln$ of the norm, and of the norm of
the inverse, of the matrix $\Delta_{n}(\omega)K_{n}(\omega)K^{t}(\omega)exp(-n\Delta)$
go to $0$ as $n\to\infty.$

It follows that, for such an $\omega$, $\big|\ln|DetA^{(n)}(\omega)|-\sum_{j}\mu_{j}\big|$
goes to $0$ as $n$ goes to $\infty$. Furthermore, for $n$ large
enough, the entries $k_{i,j}^{(n)}(\omega)$ of the matrix $K_{n}(\omega)K^{t}(\omega)$
satisfy: \[
\big|k_{i,j}^{(n)}(\omega)\big|\;\leq\; e^{n(\mu_{j}-\delta_{i}^{(n)}+\varepsilon)}.\]
 We have \[
\|A^{(n)}(\omega)e_{j}\|\;=\;\|L_{n}(\omega)\Delta_{n}(\omega)K_{n}(\omega)e_{j}\|\;=\;\|\Delta_{n}(\omega)K_{n}(\omega)K^{-1}(\omega)f_{j}\|,\]
 where $f_{i}$ is the canonical base of $\mathbb{R}^{d}$. The components
of this vector are $e^{n\delta_{i}^{(n)}(\omega)}k_{i,j}^{(n)}(\omega)$.
Their absolute values are indeed smaller than $e^{n(\mu_{j}+\varepsilon)}$
for $n$ large enough.

\

\section{The case when $X$ is a Gromov hyperbolic space (in particular $\mathbb{R}$).}

Theorem \ref{main} is due to Kaimanovich using an idea of Delzant
when $X$ is a Gromov hyperbolic geodesic space even without the condition
that $X$ is a proper space \cite{K2}. As in the $CAT(0)$-case it
is there formulated as $Z_{n}$ lies on sublinear distance of a geodesic
ray. From Theorem \ref{main} one gets the following:

\begin{corollary}\label{hyperray} Assume moreover that $X$ is a
Gromov hyperbolic geodesic space and that $\alpha>0$. Then, for $\mathbb{P}$-almost
every $\omega,$ as $n$ goes to $\infty,$ there is a geodesic ray
$\sigma_{\omega}$ such that\[
\lim_{n}\frac{1}{n}d(Z_{n}(\omega)x_{0},\sigma_{h_{\omega}}(\alpha n))=0.\]

\end{corollary}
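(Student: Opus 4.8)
The plan is to run the hyperbolic analogue of the comparison-triangle argument used for Corollary \ref{ray}, replacing the Euclidean trigonometry by the $\delta$-hyperbolic inequality for Gromov products. Write $p_n = Z_n(\omega)x_0$ and record the two linear rates already at hand: by Proposition \ref{drift} we have $\alpha_n(\omega) := \frac1n d(x_0,p_n) \to \alpha$, and by Theorem \ref{main} we have $\beta_n(\omega) := -\frac1n h_\omega(p_n) \to \alpha$, for $\mathbb P$-almost every $\omega$. In a proper geodesic Gromov hyperbolic space the horofunction $h_\omega$ determines a point $\xi_\omega$ in the Gromov boundary; I would fix a geodesic ray $\sigma_\omega$ issuing from $x_0$ and converging to $\xi_\omega$ (properness guarantees its existence via Arzel\`a--Ascoli), and this $\sigma_\omega$ is the ray claimed in the statement.

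The first real step is to translate the two rates into a statement about Gromov products based at $x_0$. Using that in a hyperbolic space a horofunction and the Busemann function of a ray to the same boundary point differ by a uniformly bounded amount, one gets $(p_n\mid \xi_\omega)_{x_0} = \tfrac12\big(d(x_0,p_n) - h_\omega(p_n)\big) + O(1) = \tfrac{n}{2}(\alpha_n + \beta_n) + O(1)$. Setting $q_n := \sigma_\omega(n\alpha_n)$, the point of the ray at distance $d(x_0,p_n)$ from $x_0$, one has $(q_n\mid \xi_\omega)_{x_0} = n\alpha_n + O(1)$ since $q_n$ lies on a geodesic to $\xi_\omega$.

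The second step is the key estimate. The $\delta$-hyperbolic inequality, extended to the boundary point $\xi_\omega$, gives $(p_n\mid q_n)_{x_0} \ge \min\{(p_n\mid \xi_\omega)_{x_0},\, (q_n\mid \xi_\omega)_{x_0}\} - C\delta$. Both arguments of the minimum equal $n\alpha + o(n)$ because $\alpha_n,\beta_n \to \alpha$, so $(p_n\mid q_n)_{x_0} \ge n\alpha_n - o(n)$. Since $d(x_0,p_n) = d(x_0,q_n) = n\alpha_n$, the definition of the Gromov product yields $d(p_n,q_n) = 2n\alpha_n - 2(p_n\mid q_n)_{x_0} \le o(n)$, whence $\frac1n d(p_n, \sigma_\omega(n\alpha_n)) \to 0$. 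Finally $d(\sigma_\omega(n\alpha_n), \sigma_\omega(\alpha n)) = |n\alpha_n - \alpha n| = n|\alpha_n - \alpha| = o(n)$ because $\sigma_\omega$ has unit speed, so the triangle inequality gives $\frac1n d(Z_n(\omega)x_0, \sigma_{h_\omega}(\alpha n)) \to 0$, as required.

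The hard part will be the boundary bookkeeping: justifying cleanly that $h_\omega$ is within a uniformly bounded sup-distance of a genuine Busemann function $b_{\sigma_\omega}$ (so the $O(1)$ terms above are truly independent of $n$), and that the Gromov-product inequality survives when one argument is pushed to $\xi_\omega$ with only an additive $O(\delta)$ loss. Both are standard in proper geodesic $\delta$-hyperbolic spaces, but they are exactly where hyperbolicity and properness (rather than mere nonpositive curvature) enter; everything else is the same sublinear-error accounting as in Corollary \ref{ray}.
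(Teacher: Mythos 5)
Your proof is correct, and it rests on the same load-bearing fact as the paper's: both start by replacing $h_\omega$ with the Busemann function of an actual geodesic ray $\sigma_\omega$ at bounded sup-distance (the paper cites this from Bridson--Haefliger, p.~428), and both then use hyperbolicity to convert the two linear rates $\alpha_n\to\alpha$ (from Proposition \ref{drift}) and $\beta_n\to\alpha$ (from Theorem \ref{main}) into sublinear closeness of $Z_n(\omega)x_0$ to $\sigma_\omega(\alpha n)$. Where you genuinely diverge is the implementation of the hyperbolicity step. The paper stays with finite data: it forms the triangle with vertices $A=Z_n(\omega)x_0$, $B=x_0$, $C_t=\sigma_{h_\omega}(t)$ for large $t$, places $D=\sigma_{h_\omega}(n\alpha)$ on the side $BC_t$, and runs a two-case analysis on whether the $\delta$-close point $X$ lies on $AB$ or on $AC_t$, using only the Rips thin-triangle condition and the triangle inequality (with $b_\omega(Z_n(\omega)x_0)\approx-\alpha n$ handling the second case). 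You instead push one vertex to the boundary and compute with extended Gromov products: $(p_n\mid\xi_\omega)_{x_0}=\frac{n}{2}(\alpha_n+\beta_n)+O(1)$, $(q_n\mid\xi_\omega)_{x_0}=n\alpha_n+O(1)$, the four-point inequality at $\xi_\omega$, and the exact identity $d(p_n,q_n)=2n\alpha_n-2(p_n\mid q_n)_{x_0}$. Your route eliminates the auxiliary parameter $t$ and the case analysis, and in fact yields the clean quantitative estimate $d\bigl(p_n,\sigma_\omega(n\alpha_n)\bigr)\leq n(\alpha_n-\beta_n)+O(\delta)$; the price, which you correctly flag as the remaining work, is importing the standard lemmas that the Gromov product extends to $\partial_{hyp}X$ with only $O(\delta)$ ambiguity and that the four-point inequality survives at the boundary with an additive $O(\delta)$ loss. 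The paper's argument needs none of that machinery beyond the horofunction-versus-Busemann comparison, at the cost of slightly less transparent bookkeeping; both arguments are complete modulo that same citation.
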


\begin{proof} Take $h_{\omega}$ given from Theorem \ref{main}.
It is known, see \cite[p. 428]{BH}, that for Gromov hyperbolic geodesic
spaces it holds that there is a geodesic ray $\sigma_{\omega}$ such
that $\sigma_{\omega}(x_{0})=0$ and\[
b_{\omega}(\cdot)=\lim_{t\rightarrow\infty}d(\cdot,\sigma_{\omega}(t))-t\]
 is a horofunction such that $\left\vert b_{\omega}(\cdot)-h_{\omega}(\cdot)\right\vert \leq C$
for some constant $C$. This $b_{\omega}$ therefore clearly satisfies
the conclusion of Theorem \ref{main}.

Now we use the notation and set-up in the proof of Corollary \ref{ray}.
Consider the triangle $ABC_{t}$. By $\delta$-hyperbolicity $D$
must lie at most $\delta$ away from either $AB$ or $AC_{t}$. Call
the closest point $X$. By the triangle inequality we must have that
\[
\alpha n-\delta\leq XB\leq\alpha n+\delta\text{.}\]
 If $X$ lie on $AB$, then it is clear that $XA=o(n)$ and hence
$AD=o(n)$. If $X$ lie on $AC_{t}$, then\[
t-\alpha n-\delta\leq XC_{t}\leq t-\alpha n+\delta.\]
 In view of that $b_{\omega}(Z_{n}(\omega))\approx-\alpha n$ we again
reach the conclusion that $X,$ and hence also $D,$ lie on sublinear
distance from $A$. \end{proof}

\begin{corollary}\label{hyperconv} With the same assumptions, we
have that for $\mathbb{P}$-almost every $\omega$, $Z_{n}(\omega)x_{0}$
converges to the point $[\sigma_{\omega}]$ in the hyperbolic boundary
$\partial_{hyp}X.$ \end{corollary}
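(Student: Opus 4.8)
The plan is to leverage Corollary \ref{hyperray}, which already tells us that $Z_{n}(\omega)x_{0}$ stays at sublinear distance from the geodesic ray $\sigma_{\omega}$, and to translate this metric statement into convergence in the hyperbolic boundary $\partial_{hyp}X$. Recall that a sequence $y_{n}$ in a proper Gromov hyperbolic space converges to a boundary point $[\sigma]$ precisely when the Gromov product $(y_{n}\mid z)_{x_{0}}$ tends to infinity for some (equivalently every) point $z$ on a geodesic ray representing $[\sigma]$, or more robustly, when $(y_{n}\mid y_{m})_{x_{0}}\to\infty$ as $n,m\to\infty$ together with the requirement that $y_{n}$ be unbounded. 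So the first step is to fix the almost-every $\omega$ for which both $\alpha>0$ and the conclusion of Corollary \ref{hyperray} hold, and set $y_{n}=Z_{n}(\omega)x_{0}$ and $[\sigma_{\omega}]$ as the candidate limit.

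Next I would quantify the sublinear approximation. From Corollary \ref{hyperray} we have $d(y_{n},\sigma_{\omega}(\alpha n))=o(n)$, and since $|Z_{n}(\omega)|=d(x_{0},y_{n})\to\infty$ linearly (as $\alpha>0$), the points $y_{n}$ escape to infinity. The key computation is to estimate the Gromov product $(y_{n}\mid\sigma_{\omega}(t))_{x_{0}}$ for $t$ large. Writing $p_{n}=\sigma_{\omega}(\alpha n)$, the triangle inequality gives
\[
(y_{n}\mid\sigma_{\omega}(t))_{x_{0}}\;\geq\;(p_{n}\mid\sigma_{\omega}(t))_{x_{0}}-d(y_{n},p_{n}).
\]
Along the single geodesic ray $\sigma_{\omega}$, the Gromov product $(p_{n}\mid\sigma_{\omega}(t))_{x_{0}}$ equals $\min(\alpha n,t)$ exactly, so for $t\geq\alpha n$ it is $\alpha n$. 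Combining with $d(y_{n},p_{n})=o(n)$ yields $(y_{n}\mid\sigma_{\omega}(t))_{x_{0}}\geq\alpha n-o(n)\to\infty$. By the standard characterization of boundary convergence in a proper hyperbolic space (for which I would cite \cite{BH}), this shows $y_{n}\to[\sigma_{\omega}]$ in $\overline{X}^{hyp}=X\cup\partial_{hyp}X$.

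The main obstacle, and the only point requiring care, is bookkeeping the additive $\delta$-errors so that the Gromov product estimate is genuinely valid rather than merely heuristic: in a $\delta$-hyperbolic space the identity $(p_{n}\mid\sigma_{\omega}(t))_{x_{0}}=\min(\alpha n,t)$ holds only up to a bounded constant depending on $\delta$, and the convergence criterion itself is stated up to $\delta$. However, since we are driving the Gromov product to $+\infty$, any bounded additive error is harmless, so these constants do not affect the conclusion; this is why the linear escape rate $\alpha>0$ is essential. One should also note that, as in Corollary \ref{conv}, the limit point is automatically unique because $\partial_{hyp}X$ is Hausdorff and $y_{n}$ converges, so $[\sigma_{\omega}]$ is well defined independently of the particular ray furnished by Corollary \ref{hyperray}.
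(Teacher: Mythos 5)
Your proposal is correct and is essentially the paper's own proof expanded: the paper disposes of this corollary in one line, observing that the Gromov product $(Z_{n}(\omega)x_{0}\mid\sigma_{\omega}(\alpha n))_{x_{0}}\rightarrow\infty$ in view of Corollary \ref{hyperray}, which is exactly your computation (linear growth $\alpha n$ of both distances from $x_{0}$ minus the sublinear mutual distance). Your additional bookkeeping --- the $1$-Lipschitz estimate on the Gromov product, the exact value $\min(\alpha n,t)$ along the ray, and the remark that bounded $\delta$-errors are harmless since the product is driven to $+\infty$ --- is all sound and simply makes explicit what the paper leaves as ``clearly''.
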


\begin{proof} Clearly, the Gromov product $(Z_{n}(\omega),\sigma_{\omega}(\alpha n))\rightarrow\infty$
as $n\rightarrow\infty$ in view of the previous corollary. \end{proof}

\ 

In the case when $G=\mathbb{R}$ and $X=(\mathbb{R},\left\vert \cdot\right\vert ),$
Corollaries \ref{hyperray} and \ref{hyperconv} yield Theorem \ref{Birkhoff}.
Indeed, in this case the drift is:\[
\alpha=\left\vert \int_{\Omega}fd\mathbb{P}\right\vert \]
 and $\partial\mathbb{R=\{}h_{+}=\Phi_{+\infty}(z)=-z,$ $h_{-}=\Phi_{-\infty}(z)=z\mathbb{\}}.$
It follows from Corollary \ref{hyperconv} that the index of $h_{\omega}$
is $T$ invariant and is therefore almost everywhere constant. In
other words, the existence of the $h_{\omega}$ with the required
property amounts to the choice of the right sign:\[
h_{\omega}=\Phi_{sign\left\{ \int_{\Omega}f(\omega)d\mathbb{P}(\omega)\right\} \infty}.\]
 Then, Corollary \ref{hyperray} say exactly that if the function
$f$ is integrable, for $\mathbb{P}$-almost every $\omega$\[
\frac{1}{n}S_{n}(\omega)\rightarrow\int_{\Omega}f(\omega)d\mathbb{P}(\omega).\]

The above observation is not a new proof of Theorem \ref{Birkhoff},
because Theorem \ref{Birkhoff} is used in the proof of Theorem \ref{main}
(see section 5). We only want to illustrate the meaning of the metric
boundary on the simplest example. Nevertheless, it turns out that
modifying the translation invariant metric on $X=\mathbb{R}$ might
have interesting consequences. The following discussion comes from
\cite{KMo}, which in turn was inspired by \cite{LL}.

Let $D:\mathbb{R}_{\geq0}\rightarrow\mathbb{R}_{\geq0}$ be an increasing
function, $D(t)\rightarrow\infty$ such that $D(0)=0$ and $D(t)/t\rightarrow0$
monotonically. From the inequality\[
\frac{1}{t+s}D(t+s)\leq\frac{1}{t}D(t)\]
 we get the following subadditivity property\[
D(t+s)\leq D(t)+\frac{s}{t}D(t)=D(t)+\frac{D(t)/t}{D(s)/s}D(s)\leq D(t)+D(s).\]
 From all these properties of $D$, it follows that $(\mathbb{R},D(\left\vert \cdot\right\vert ))$
is a proper metric space, and clearly invariant under translations.

Now we determine $\partial\mathbb{R}$ with respect to this metric.
Wlog we may assume that $x_{n}\rightarrow\infty$. We claim that for
any $z$\[
h(z)=\lim_{n\rightarrow\infty}D(x_{n}-z)-D(x_{n})=0.\]
 Assume not. Then for some $s>0$ and an infinite sequence of $t\rightarrow\infty$
that $D(t+s)-D(t)>c>0$ (wlog). For such $s$ and $t$ with $t$ large
so that $D(t)/t<c/s,$ we have\[
\frac{D(t+s)}{t+s}\geq\frac{D(t)+c}{t+s}\geq\frac{D(t)+\frac{D(t)}{t}s}{t+s}=\frac{D(t)}{t}\]
 but this contradicts that $D(t)/t$ is strictly decreasing. Hence
$\partial\mathbb{R}=\{h\equiv0\}.$

Applying Theorem \ref{main} in this setting yields a result already
obtained by Aaronson with a different argument.

\begin{theorem}\label{Aaronson} {[}Aaronson \cite{A}{]} Let $f:\Omega\rightarrow\mathbb{R}$
such that $\int_{\Omega}D(\left\vert f\right\vert )d\mu<\infty.$Then,
for $\mathbb{P}$-almost every $\omega$, \[
\lim_{n\rightarrow\infty}\frac{1}{n}D\left(\left\vert S_{n}(\omega)\right\vert \right)=0.\]

\end{theorem}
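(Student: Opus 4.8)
The plan is to apply Theorem \ref{main} directly, taking the proper metric space to be $X=(\mathbb{R},D(|\cdot|))$, the group to be $G=\mathbb{R}$ acting on $X$ by translations $g\cdot x=g+x$, and the base point to be $x_{0}=0$. Translations are isometries precisely because the metric $D(|x-y|)$ is translation invariant, and we have already verified that this space is proper. The whole argument rests on the earlier computation that the metric boundary of this space is a single point, namely $\partial\mathbb{R}=\{h\equiv0\}$; it is exactly this degeneracy of the boundary that will force the sublinear conclusion.

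First I would define the cocycle by setting $g(\omega)=f(\omega)$, viewed as the translation by $f(\omega)$. Since the group law on $\mathbb{R}$ is addition, the product $Z_{n}(\omega)=g(\omega)g(T\omega)\cdots g(T^{n-1}\omega)$ is simply the translation by $\sum_{i=0}^{n-1}f(T^{i}\omega)=S_{n}(\omega)$. Evaluating at the base point gives $Z_{n}(\omega)x_{0}=S_{n}(\omega)$ and hence
\[
d(x_{0},Z_{n}(\omega)x_{0})=D(|S_{n}(\omega)|).
\]
The moment hypothesis of Theorem \ref{main} becomes $\int|g(\omega)|\,d\mathbb{P}=\int D(|f(\omega)|)\,d\mathbb{P}$, which is finite by assumption. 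The one point requiring genuine care is this bookkeeping: checking that the displacement $|g(\omega)|=d(x_{0},g(\omega)x_{0})$ really equals $D(|f(\omega)|)$, and that the cocycle evaluated at $x_{0}$ reproduces the Birkhoff sum $S_{n}$ with the correct indexing of the iterates of $T$.

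With these identifications in hand, Theorem \ref{main} supplies, for $\mathbb{P}$-almost every $\omega$, a horofunction $h_{\omega}\in\partial\mathbb{R}$ with
\[
\lim_{n\to\infty}-\frac{1}{n}h_{\omega}(Z_{n}(\omega)x_{0})\;=\;\lim_{n\to\infty}\frac{1}{n}d(x_{0},Z_{n}(\omega)x_{0}).
\]
But $\partial\mathbb{R}=\{h\equiv0\}$, so necessarily $h_{\omega}\equiv0$ and the left-hand limit vanishes. Substituting the identifications above, the right-hand side reads $\lim_{n}\frac{1}{n}D(|S_{n}(\omega)|)$, and we conclude that this limit is $0$ for almost every $\omega$, which is exactly the assertion of Theorem \ref{Aaronson} (note that no ergodicity hypothesis is needed, consistent with the statement of Theorem \ref{main}). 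I do not expect a serious obstacle here: all the analytic content is absorbed into Theorem \ref{main}, proved in Section 5, and into the prior determination of $\partial\mathbb{R}$, so the proof reduces to a correct translation of the hypotheses followed by a substitution.
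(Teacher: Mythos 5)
Your proof is correct and takes essentially the same route as the paper: the paper's proof likewise consists of invoking the previously established fact that $\partial(\mathbb{R},D(\left\vert\cdot\right\vert))=\{h\equiv 0\}$ and then applying Theorem \ref{main}, since $h_{\omega}\equiv 0$ forces the drift to vanish. Your write-up merely makes explicit the bookkeeping the paper leaves implicit, namely $Z_{n}(\omega)x_{0}=S_{n}(\omega)$ and $|g(\omega)|=D(|f(\omega)|)$.
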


\begin{proof} It was noted above that $\partial(\mathbb{R},D(\left\vert \cdot\right\vert ))$
only consisted of $h=0.$ The conclusion then follows from Theorem
\ref{main} since $h=0$ forces $\alpha=0.$ \end{proof}

\

One can relax the conditions on $D$: for one thing, one can remove
having $D(0)=0$. More interestingly, the condition that $D(t)/t$
decreases to $0$ can be weakened in the following way.

\begin{corollary}\label{d} {[}Aaronson-Weiss \cite{A}{]} Let $d(t)$
be an increasing positive function, $d(t)\rightarrow\infty$, such
that $d(t)=o(t)$, $d(t+s)\leq d(t)+d(s)$ and $\int_{\Omega}d(\left\vert f\right\vert )d\mu<\infty$
for some function $f:\Omega\rightarrow\mathbb{R}$. Then, for $\mathbb{P}$-almost
every $\omega$, \[
\lim_{n\rightarrow\infty}\frac{1}{n}d\left(\left\vert S_{n}(\omega)\right\vert \right)=0.\]
 \end{corollary}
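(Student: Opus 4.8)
The plan is to reduce Corollary \ref{d} to Theorem \ref{Aaronson} by replacing the merely subadditive function $d$ with a genuinely concave function $D$ that is comparable to it. The reason one cannot invoke Theorem \ref{Aaronson} verbatim is precisely that $d$ is only assumed subadditive, whereas the triviality of $\partial(\mathbb{R}, D(|\cdot|))$ used there needs the ratio $D(t)/t$ to decrease to $0$. So first I would form the least concave majorant $D := \check{d}$ of $d$ on $[0,\infty)$, that is, the smallest concave function dominating $d$.

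The key step is the two-sided comparison $d \le D \le 2d$. The lower bound is immediate. For the upper bound I would use the one-dimensional representation of the concave hull as a supremum over chords, $D(t) = \sup\{\lambda d(a) + (1-\lambda)d(b) : \lambda a + (1-\lambda)b = t,\ 0 \le a \le t \le b\}$, together with subadditivity in the form $d(b-t) \le (\frac{b-t}{t}+1)d(t)$ (which follows from $d(ku)\le k\,d(u)$ and monotonicity). Bounding a chord value by $\lambda d(a) + (1-\lambda)d(b) \le d(t) + \frac{t-a}{b-a}d(b-t)$ and using $\frac{t-a}{b-a} \le \frac{t}{b}$ for $a \ge 0$ collapses the second term to at most $d(t)$, giving $D(t) \le 2d(t)$.

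With this $D$ in hand the remaining steps are routine. Being concave and tending to $\infty$ (since $D \ge d$), the function $D$ is nondecreasing, and with $D(0)\ge 0$ it is automatically subadditive with $D(t)/t$ nonincreasing; moreover $D(t)/t \le 2d(t)/t \to 0$. Hence $(\mathbb{R}, D(|\cdot|))$ is a proper translation-invariant metric space, and its metric boundary is trivial: for fixed $z$, concavity gives $0 \le D(t)-D(t-z) \le z\,D(t-z)/(t-z) \to 0$, so every horofunction vanishes identically, exactly as in the computation preceding Theorem \ref{Aaronson}. Since $\int D(|f|)d\mu \le 2\int d(|f|)d\mu < \infty$, Theorem \ref{main} (equivalently Theorem \ref{Aaronson}) applies and forces $\frac{1}{n}D(|S_n|)\to 0$ almost everywhere. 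Finally $d \le D$ transfers this to $\frac{1}{n}d(|S_n|) \le \frac{1}{n}D(|S_n|) \to 0$.

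I expect the only real obstacle to be the upper bound $D \le 2d$: this is where subadditivity must be used essentially, and it is what makes the concave majorant (rather than some other monotone majorant, which need not be comparable to $d$) the right object. The minor points — the normalization $D(0)=0$ needed for a genuine metric, handled as in the remark that the condition $D(0)=0$ may be dropped, and the passage from nonstrict to strict monotonicity of $D(t)/t$, which is sidestepped here because concavity yields the boundary computation directly — I would dispatch in a line each.
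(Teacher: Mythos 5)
Your proof is correct, and it follows the same overall strategy as the paper --- sandwich $d\leq D\leq 2d$ by a regularized majorant $D$ satisfying the hypotheses of Theorem \ref{Aaronson}, apply that theorem to $D$, and transfer back via $d\leq D$ --- but with a different construction of the majorant. The paper takes $D(t)=\sup\{d(ut)/u:u\geq1\}$, which is precisely the least majorant of $d$ with $D(t)/t$ nonincreasing (note $D(t)/t=\sup_{s\geq t}d(s)/s$), and gets $D\leq 2d$ in one line: if $D(t)$ is (nearly) attained at $u$, set $n=[u]+1$, so $D(t)\leq d(nt)/u\leq n\,d(t)/u\leq 2d(t)$. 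Your least concave majorant dominates the paper's envelope (concavity with $D(0)\geq0$ gives $D(t)/t\geq D(s)/s\geq d(s)/s$ for $s\geq t$), and your chord estimate is the same ceiling trick $d(b-t)\leq\lceil (b-t)/t\rceil\,d(t)$ run inside the chord representation, so the two factor-$2$ bounds have the same engine; your parenthetical that a generic monotone majorant need not be comparable to $d$ is right, but the paper's canonical sublinear envelope \emph{is} comparable and is cheaper to verify than the concave hull, which costs you the chord representation and a finiteness check for the hull on an unbounded domain (your $\leq 2d$ bound supplies the latter). What concavity buys in exchange is a cleaner endgame: the direct estimate $0\leq D(t)-D(t-z)\leq z\,D(t-z)/(t-z)\rightarrow0$ shows every horofunction of $(\mathbb{R},D(|\cdot|))$ vanishes without appealing to the strict decrease of $D(t)/t$, which the paper's boundary computation preceding Theorem \ref{Aaronson} invokes somewhat loosely; and both proofs equally rely on the paper's remark that the normalization $D(0)=0$ can be dropped.
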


\begin{proof} Define\[
D(t)=\sup\{d(ut)/u:u\geq1\}.\]
 Note that this satisfies all the assumptions made on $D$ in Theorem
\ref{Aaronson}. Moreover\[
d(t)\leq D(t)\leq2d(t),\]
 since if $D(t)=d(tu)/u,$ set $n=[u]+1$ and then $D(u)\leq d(nt)/u\leq nd(t)/u\leq2d(t).$
See \cite{A}, page 66, for more details. This shows that Theorem
\ref{Aaronson} actually holds for $d$ in place of $D$. \end{proof}

In particular, Corollary \ref{d} applies to any metric $d(.,.)$
on $\mathbb{R}$ where balls grow superlinearly (where $d(t):=d(0,t)$).
From this one obtains as a special case classical results like the
one of Marcinkiewicz-Zygmund \cite{MZ} and Sawyer (\cite{S}): \begin{corollary}
Let $0<p<1.$ If $f\in L^{p},$ then for $\mathbb{P}$-almost every
$\omega$ \[
\lim_{n\rightarrow\infty}\frac{1}{n^{1/p}}S_{n}=0.\]

\end{corollary}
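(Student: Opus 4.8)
The plan is to deduce this statement from Corollary \ref{d} by choosing the concave modulus $d(t)=t^{p}$, so that the superlinear growth of the corresponding (translation-invariant, proper) metric on $\mathbb{R}$ furnishes exactly the scaling $n^{1/p}$ appearing in the conclusion.

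First I would verify that $d(t)=t^{p}$ satisfies every hypothesis of Corollary \ref{d}. Since $0<p<1$, the function $t\mapsto t^{p}$ is increasing, positive, tends to infinity, and satisfies $d(t)=t^{p}=o(t)$ as $t\to\infty$. The required subadditivity $d(t+s)\le d(t)+d(s)$ is the elementary inequality $(t+s)^{p}\le t^{p}+s^{p}$, valid for $0<p\le 1$: normalizing by $u=t/(t+s)$ and $v=s/(t+s)$, so that $u+v=1$ with $u,v\in[0,1]$, one uses $u^{p}\ge u$ and $v^{p}\ge v$ to get $u^{p}+v^{p}\ge 1$, and multiplying through by $(t+s)^{p}$ yields the claim. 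Finally, the integrability condition $\int_{\Omega}d(|f|)d\mu=\int_{\Omega}|f|^{p}d\mu<\infty$ is precisely the hypothesis $f\in L^{p}$.

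Having checked the hypotheses, Corollary \ref{d} gives, for $\mathbb{P}$-almost every $\omega$, \[\lim_{n\to\infty}\frac{1}{n}\,d(|S_{n}(\omega)|)=\lim_{n\to\infty}\frac{|S_{n}(\omega)|^{p}}{n}=0,\] i.e. $|S_{n}(\omega)|^{p}=o(n)$. Raising to the power $1/p$, a continuous increasing operation, turns this into $|S_{n}(\omega)|=o(n^{1/p})$, which is exactly the assertion $\frac{1}{n^{1/p}}S_{n}(\omega)\to 0$. Every step is either a direct verification or an elementary algebraic manipulation, so there is no genuine obstacle; the only point that requires any care is the subadditivity $(t+s)^{p}\le t^{p}+s^{p}$, which is the crux that certifies $t^{p}$ as an admissible modulus and thereby lets the metric-boundary machinery of Theorem \ref{main}, packaged in Corollary \ref{d}, apply.
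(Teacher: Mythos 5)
Your proposal is correct and matches the paper's intended argument: the paper states this corollary without separate proof, presenting it as an immediate special case of Corollary \ref{d} with the choice $d(t)=t^{p}$, which is exactly what you carry out (including the standard subadditivity $(t+s)^{p}\le t^{p}+s^{p}$ and the identification of $\int|f|^{p}d\mu<\infty$ with $f\in L^{p}$). One could even apply Theorem \ref{Aaronson} directly, since $t^{p}/t=t^{p-1}$ decreases monotonically to $0$, but your route through Corollary \ref{d} is precisely the one the paper indicates.
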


Such moment conditions arise naturally in probability theory. These
results are known to be best possible in certain ways (e.g. \cite{S}
and \cite{A}). For the iid case the converse also holds (\cite{MZ}).
Another example

\begin{corollary} If $f$ is log-integrable, then for $\mathbb{P}$-almost
every $\omega$ \[
\lim_{n\rightarrow\infty}\left\vert S_{n}\right\vert ^{1/n}=1.\]

\end{corollary}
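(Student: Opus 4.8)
The plan is to obtain the statement as a direct application of Corollary \ref{d}, the point being simply to choose the right subadditive gauge. I take $d(t)=\log(1+t)$ (using the relaxation noted just before Corollary \ref{d} that the normalization at $t=0$ is inessential). First I would verify the four hypotheses of Corollary \ref{d} for this $d$: it is positive and increasing with $d(t)\to\infty$; it satisfies $d(t)=o(t)$ since $\log(1+t)/t\to 0$; and it is subadditive because $(1+t)(1+s)\ge 1+t+s$ forces
\[
d(t+s)=\log(1+t+s)\le\log\big((1+t)(1+s)\big)=d(t)+d(s).
\]
Finally the moment condition reads $\int_\Omega d(|f|)\,d\mu=\int_\Omega\log(1+|f|)\,d\mu<\infty$, which is precisely the hypothesis that $f$ is log-integrable.

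Granting these checks, Corollary \ref{d} applies and gives, for $\mathbb{P}$-almost every $\omega$,
\[
\frac{1}{n}\log\big(1+|S_n(\omega)|\big)\longrightarrow 0 .
\]
Exponentiating shows $(1+|S_n|)^{1/n}\to 1$, and since $|S_n|\le 1+|S_n|$ we get at once $\limsup_n|S_n|^{1/n}\le 1$. When $|S_n(\omega)|\ge 1$ for all large $n$ --- in particular whenever $|S_n|\to\infty$ --- the reverse inequality is equally easy: then $1+|S_n|\le 2|S_n|$, whence $|S_n|^{1/n}\ge(1+|S_n|)^{1/n}2^{-1/n}\to 1$ and the announced limit $\lim_n|S_n|^{1/n}=1$ follows.

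The delicate point, which I expect to be the only real obstacle, is the matching lower bound $\liminf_n|S_n|^{1/n}\ge 1$ in full generality, i.e.\ ruling out exponential decay of $|S_n|$ along a subsequence. This is where the statement carries its genuine content, the upper bound being the soft consequence of Corollary \ref{d} above, and it cannot hold without some further input: for very special (e.g.\ periodic) systems exact cancellations can make $|S_n|$ vanish infinitely often. Under log-integrability one at least has $|f(T^n\omega)|=|S_{n+1}-S_n|\le 2\max(|S_n|,|S_{n+1}|)$, so consecutive partial sums cannot both be too small; turning this into a clean lower bound on $|S_n|$ itself is the part requiring care, and for the intended applications, where $S_n$ escapes to infinity, the elementary argument of the previous paragraph already suffices.
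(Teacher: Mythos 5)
Your argument is exactly the one the paper intends: the corollary is stated with no separate proof, as a direct instance of Corollary \ref{d} (indeed of Theorem \ref{Aaronson} itself, since $\log(1+t)/t$ decreases monotonically to $0$) with the gauge $d(t)=\log(1+t)$, whose hypotheses you verify correctly, yielding $\frac{1}{n}\log\left(1+|S_n(\omega)|\right)\to 0$ almost everywhere. Your caveat about the lower bound is also well taken and points to an imprecision in the paper's statement rather than a gap in your proof: for the Bernoulli shift with $f=\pm 1$ (simple random walk on $\mathbb{Z}$), $f$ is log-integrable but $S_n=0$ infinitely often almost surely, so $\liminf_n|S_n|^{1/n}=0$ and the literal limit fails; the conclusion must be read as $\lim_n\left(1+|S_n|\right)^{1/n}=1$, equivalently $\frac{1}{n}\log^{+}|S_n|\to 0$, which is precisely what your application of Corollary \ref{d} delivers, and your remark that the stated limit does hold whenever $|S_n|\geq 1$ eventually covers the nondegenerate cases.
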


One can modify the metric on any metric space $X$ in the same way
replacing $d(x,y)$ with $D(d(x,y))$, where $D(t)$ satisfies the
assumptions for Theorem \ref{Aaronson} or, more generally, the assumptions
for Corollary \ref{d}. By estimating a subadditive by an additive
cocycle in the obvious way,\[
a(n,\omega)\leq\sum_{k=0}^{n-1}a(1,T^{k}\omega),\]
 Theorem \ref{Aaronson} implies that\[
\frac{1}{n}D(d(Z_{n}x_{0},x_{0}))\rightarrow0\text{ a.e.}\]
 under the condition that $D(d(g(\omega)x_{0},x_{0}))$ is integrable.

\section{Proof of Theorem \ref{main}.}

We begin by a few observations: firstly, we can extend by continuity
the action of $G$ to $\overline{X}$, and write, for $h\in\overline{X},g\in G$:
\[
g.h(z)\;=\; h(g^{-1}z)-h(g^{-1}x_{0}).\]
 Define now the skew product action on $\overline{\Omega}:=\Omega\times\overline{X}$
by: \[
\overline{T}(\omega,h)\;=(T\omega,g(\omega)^{-1}.h).\]
 Observe that $\overline{T}^{n}(\omega,h)\;=(T^{n}\omega,(Z_{n}(\omega))^{-1}.h).$
Define the Furstenberg cocycle $\overline{F}(\omega,h)$ by $\overline{F}(\omega,h):=-h(g(\omega)x_{0}).$
We have: \begin{equation}
\overline{F}_{n}(\omega,h)\;:=\;\sum_{i=0}^{n-1}\overline{F}(\overline{T}^{i}(\omega,h))\;=\;-h(Z_{n}(\omega)x_{0}).\label{cocycle}\end{equation}
 Relation (\ref{cocycle}) is proven by induction on $n$. We have
$\overline{F}_{1}(\omega,h):=-h(g(\omega)x_{0})=-h(Z_{1}(\omega)x_{0})$
and \begin{align*}
\overline{F}_{n}(\omega,h)\; & =\;\overline{F}_{n-1}(\omega,h)+\overline{F}(\overline{T}^{n-1}(\omega,h))\\
 & =\;-h(Z_{n-1}(\omega)x_{0})-(Z_{n-1}(\omega))^{-1}.h(g(T^{n-1}\omega)x_{0})\\
 & =\;-h(Z_{n-1}(\omega)x_{0})-h(Z_{n-1}(\omega)g(T^{n-1}(\omega))x_{0})+h(Z_{n-1}(\omega)x_{0})\\
 & =\;-h(Z_{n}(\omega)x_{0}).\end{align*}

In particular, for any $\overline{T}$ invariant measure $m$ on $\overline{\Omega}$
such that the projection on $\Omega$ is $\mathbb{P}$, we have $\int\overline{F}dm\leq\alpha$
because: \[
\int\overline{F}(\omega,h)dm(\omega,h)\;=\;\frac{1}{n}\int-h(Z_{n}(\omega)x_{0})dm(\omega,h)\;\leq\frac{1}{n}\int|Z_{n}(\omega)|d\mathbb{P}(\omega).\]

There is nothing to prove if $\alpha=0$. To prove Theorem \ref{main}
in the case $\alpha>0$, it suffices to construct a $\overline{T}$
invariant measure $m$ on $\overline{\Omega}$ such that the projection
on $\Omega$ is $\mathbb{P}$ and such that $\int\overline{F}(\omega,h)dm(\omega,h)=\alpha.$
Indeed, since $\alpha$ is the largest possible value of $\int\overline{F}$,
we still have the same equality for almost every ergodic component
of $m$. By the Ergodic Theorem \ref{Birkhoff}, the set $A$ of $(\omega,h)$
such that $-\frac{1}{n}h(Z_{n}(\omega)x_{0})=\frac{1}{n}\overline{F}_{n}(\omega,h)\rightarrow\alpha$
as $n$ goes to $\infty$ has full measure. Moreover, observe that
if $h$ is not a point in $\partial X$, $-\frac{1}{n}h_{\gamma}(Z_{n}(\omega)x_{0})$
converges to $-\alpha$. Since $\alpha>0$, this shows that $A\subset\partial X$.
We get the conclusion of Theorem \ref{main} by choosing for $\omega\mapsto h_{\omega}$
a measurable section of the set $A$.

\
 We finally construct a measure $m$ with those properties. We define
a measure $\mu_{n}$ on $\overline{\Omega}$; for any measurable function
$\Xi$ on $\overline{\Omega}$ such that \[
\int\sup_{h\in\overline{X}}\left|\Xi(\omega,h)\right|d\mathbb{P}(\omega)<\infty,\]
 we set: \[
\int_{\Omega\times\overline{X}}\Xi(\omega,h)d\mu_{n}(\omega,h)\;=\;\int_{\Omega}\Xi(\omega,\Phi_{Z_{n}(\omega)x_{0}})d\mathbb{P}(\omega).\]
 The set of measures $m$ on $\overline{\Omega}$ such that the projection
on $\Omega$ is $\mathbb{P}$ is a convex compact subset of $L^{\infty}(\Omega,\mathcal{P}(\overline{X}))=(L^{1}(\Omega,C(\overline{X})))^{\ast}$
for the weak{*} topology. The mapping $m\mapsto(\overline{T})_{\ast}m$
is affine and continuous. We can take for $m$ any weak{*} limit point
of the sequence: \[
\eta_{n}\;=\;\frac{1}{n}\sum_{i=0}^{n-1}(\overline{T}^{i})_{\ast}\mu_{n}.\]
 The measure $m$ is $\overline{T}$ invariant and, since \[
\|\overline{F}\|_{L^{1}(\Omega,C(\overline{X}))}=\int\sup_{h}\big|\overline{F}(\omega,h)\big|d\mathbb{P}(\omega)=\int\sup_{h}|h(g(\omega)x_{0})|d\mathbb{P}(\omega)<+\infty,\]
 we may write, using relation (\ref{cocycle}) and the formula (\ref{alpha}):
\begin{align*}
\int\overline{F}dm\; & =\;\lim_{k\to\infty}\frac{1}{n_{k}}\int\sum_{i=0}^{n_{k}-1}(\overline{F}\circ\overline{T}^{i})d\mu_{n_{k}}\\
 & =\;\lim_{k\to\infty}\frac{1}{n_{k}}\int\overline{F}_{n_{k}}(\omega,\Phi_{Z_{n_{k}}(\omega)x_{0}})d\mathbb{P}(\omega)\\
 & =\;\lim_{k\to\infty}\frac{1}{n_{k}}\int(-\Phi_{Z_{n_{k}}(\omega)x_{0}}(Z_{n_{k}}(\omega)x_{0}))d\mathbb{P}(\omega)\\
 & =\;\lim_{k\to\infty}\frac{1}{n_{k}}\int|Z_{n_{k}}(\omega)|d\mathbb{P}(\omega)\;=\alpha.\end{align*}
 By the above discussion this achieves the proof of Theorem \ref{main}.

\
 Observe that by putting together the discussions in sections 5 and
3, we obtain a proof of Oseledets Theorem \ref{OMET}. As proofs of
Theorem \ref{OMET} go, this one is in some sense rather close to
the original one (\cite{O}), with the somewhat simplifying use of
the geometric ideas from \cite{K3} and invariant measures as in \cite{W}.

\section{ Random Walks.}

In this section we consider a probability $\nu$ on a group $G$ and
apply the preceeding analysis to the random walk $Z_{n}=g_{0}g_{1}\dots g_{n-1}$,
where the $g_{i}$ are independent with distribution $\nu$. We assume: 
\begin{itemize}
\item there is a proper left invariant metric $d$ on $G$ which generates
the topology of $G$ (when $G$ is second countable locally compact,
such a metric always exists, see \cite{St}), 
\item $\int d(e,g)d\nu(g)<+\infty$ (we say that $\nu$ has a \textit{first
moment}) and 
\item the closed subgroup generated by the support of $\nu$ is the whole
$G$ (we say that $\nu$ is \textit{non-degenerate}). 
\end{itemize}
Then, there is a number $\ell(\nu)\geq0$ such that, for almost every
sequence $\{g_{i}\}$, $\lim_{n}\frac{1}{n}d(e,Z_{n})=\ell(\nu).$
In the case when the group $G$ is the group $SL_{2}(\mathbb{R})$
acting on the hyperbolic plane, $\ell(\nu)$ is twice the Lyapunov
exponent of the independent product of matrices. In that case it is
given by a formula involving the stationary measure on the circle,
the Furstenberg-Khasminskii formula (\cite{F1}; this appellation
seems to be standard, cf. \cite{Ar}). Seeing again the circle as
the geometric boundary of the hyperbolic plane, we extend this formula
to our general context: \begin{theorem}\label{FK}{[}Furstenberg-Khasminskii
formula for the linear drift, \cite{KL2}{]}. Let $(G,\nu)$ verify
all the above assumptions, and let $\overline{G}$ be the metric compactification
of $(G,d)$. Then there exists a measure $\mu$ on $\overline{G}$
with the following properties: 
\begin{itemize}
\item $\mu$ is stationary for the action of $G$, i.e. $\mu$ satisfies
$\mu=\int(g_{\ast}\mu)d\nu(g)$ and 
\item $\ell(\nu)\;=\;\int h(g^{-1})d\mu(h)d\nu(g).$ 
\end{itemize}
Moreover, if $\ell(\nu)>0$, then $\mu$ is supported on $\partial G$.
\end{theorem}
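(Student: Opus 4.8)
The plan is to realise the random walk inside the framework of Section 5 and to read off the desired measure as the $\overline{G}$-marginal of the invariant measure $m$ constructed there. I take $\Omega=G^{\mathbb{N}}$, $\mathbb{P}=\nu^{\otimes\mathbb{N}}$, $T$ the (ergodic) shift, $g(\omega)=\omega_{0}$, and let $G$ act on $X=G$ by left translations; these are isometries since $d$ is left invariant, and $(G,d)$ is proper. Then $Z_{n}(\omega)=\omega_{0}\cdots\omega_{n-1}$ has law $\nu^{\ast n}$, the first moment hypothesis is exactly $\int|g|\,d\mathbb{P}<\infty$ with $|g|=d(e,g)$, and $\ell(\nu)=\alpha$ of Proposition \ref{drift}. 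Writing $x_{0}=e$ and unwinding the extended action one checks $g.\Phi_{w}=\Phi_{gw}$, so the $\overline{G}$-marginal of $\mu_{j}:=$ law of $(\omega,\Phi_{Z_{j}(\omega)})$ is $\overline{\nu^{\ast j}}:=$ law of $\Phi_{Z_{j}}$. I define $\mu$ to be the $\overline{G}$-marginal of the invariant measure $m=\lim_{k}\frac{1}{n_{k}}\sum_{i=0}^{n_{k}-1}(\overline{T}^{i})_{\ast}\mu_{n_{k}}$ built in Section 5; the short computation $(\overline{T}^{i})_{\ast}\mu_{j}=\mu_{j-i}$, which uses $Z_{i}^{-1}Z_{j}=Z_{j-i}(T^{i}\omega)$ together with $g.\Phi_{w}=\Phi_{gw}$, identifies this marginal with the corresponding weak-$\ast$ limit of $\frac{1}{n_{k}}\sum_{j=1}^{n_{k}}\overline{\nu^{\ast j}}$ (a limit point exists by compactness of $\overline{G}$).

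Stationarity of $\mu$ follows from $g.\Phi_{w}=\Phi_{gw}$: if $g\sim\nu$ is independent of $Z_{j}\sim\nu^{\ast j}$ then $gZ_{j}\sim\nu^{\ast(j+1)}$, whence $\int g_{\ast}\overline{\nu^{\ast j}}\,d\nu(g)=\overline{\nu^{\ast(j+1)}}$. Thus $\int g_{\ast}\big(\frac{1}{N}\sum_{j=1}^{N}\overline{\nu^{\ast j}}\big)\,d\nu(g)$ differs from $\frac{1}{N}\sum_{j=1}^{N}\overline{\nu^{\ast j}}$ by $\frac{1}{N}(\overline{\nu^{\ast(N+1)}}-\overline{\nu^{\ast 1}})$, which tends to $0$; passing to the weak-$\ast$ limit gives $\mu=\int g_{\ast}\mu\,d\nu(g)$.

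For the formula I evaluate $\int\int h(g^{-1})\,d\mu(h)\,d\nu(g)$ along the defining sequence. Set $a_{j}:=\int|Z_{j}|\,d\mathbb{P}=\int|w|\,d\nu^{\ast j}(w)$, so that $a_{j}/j\to\alpha$ by Proposition \ref{drift}. Left invariance gives $\Phi_{Z_{j}(\omega)}(g^{-1})=d(Z_{j},g^{-1})-d(Z_{j},e)=|gZ_{j}|-|Z_{j}|$, and since $gZ_{j}\sim\nu^{\ast(j+1)}$ when $g\sim\nu$ is independent of $\omega$, I obtain $\int\int\Phi_{Z_{j}(\omega)}(g^{-1})\,d\mathbb{P}(\omega)\,d\nu(g)=a_{j+1}-a_{j}$. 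Averaging telescopes: $\frac{1}{N}\sum_{j=1}^{N}(a_{j+1}-a_{j})=\frac{1}{N}(a_{N+1}-a_{1})\to\alpha=\ell(\nu)$. What remains is to exchange this average with the weak-$\ast$ limit defining $\mu$; this is legitimate because $h\mapsto h(g^{-1})$ is continuous on $\overline{G}$ (evaluation at the fixed point $g^{-1}$) and is dominated uniformly in $h$ by $|h(g^{-1})|\le d(g^{-1},e)=|g|\in L^{1}(\nu)$, so one passes to the limit in $h$ for each fixed $g$ and then applies dominated convergence in $g$.

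Finally, when $\ell(\nu)=\alpha>0$ the measure $\mu$ is carried by $\partial G$: the proof of Theorem \ref{main} shows that $m$ gives full mass to the set $A$ of pairs with $-\frac{1}{n}h(Z_{n})\to\alpha$, and that $A\subset\Omega\times\partial X$ once $\alpha>0$, since an interior horofunction $\Phi_{x}$ would give $-\frac{1}{n}\Phi_{x}(Z_{n})\to-\alpha\ne\alpha$. As $\mu$ is the $\overline{G}$-marginal of $m$, this forces $\mu(\partial G)=1$. The step I expect to be the main obstacle is exactly the interchange of the weak-$\ast$ limit with the integral of the unbounded integrand $h(g^{-1})$: this integrand is only bounded on compacta of $\overline{G}$, so the argument genuinely relies on the uniform domination by $|g|$, and it is the first moment hypothesis on $\nu$ (rather than any compactness of the limit alone) that lets the telescoping identity survive the passage to the limit.
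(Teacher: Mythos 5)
Your proposal is correct and takes essentially the same route as the paper: the same Ces\`aro/telescoping construction (with $g.\Phi_w=\Phi_{gw}$ and $gZ_j\sim\nu^{\ast(j+1)}$ giving both stationarity and $\ell(\nu)$), the same domination $|h(g^{-1})|\le|g|\in L^{1}(\nu)$ implicitly underlying the limit interchange, and the same reduction of the support statement to the invariant measure $m$ of Section 5. The only difference is one of direction, which the paper itself flags as immaterial: you obtain $\mu$ as the $\overline{G}$-marginal of $m$, whereas the paper constructs $\mu$ directly as a weak-$\ast$ limit of $\frac{1}{n}\sum_{i}\nu_{i}$ and then recovers $m$ as the $\overline{T}$-invariant extension of $\mathbb{P}\times\mu$ to deduce $\mu(\partial G)=1$.
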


\begin{proof} In the proof of Theorem \ref{main}, we constructed
a measure $m$ on $\Omega\times\overline{G}$. The measure $\mu$
can be seen as the projection on $\overline{G}$ of $m$, but it turns
out that the measure $\mu$ can be directly constructed. Let $(\Omega^{+},\mathcal{A}^{+},\mathbb{P})$
be the space of sequences $\{g_{0},g_{1},\dots\}$ with product topology,
$\sigma$-algebra and measure $\mathbb{P}=\nu^{\otimes\mathbb{N}}$.
For $n\geq0$, let $\nu_{n}$ be the distribution of $Z_{n}(\omega)$
in $\overline{G}$. In other words, define, for any continuous function
$f$ on $\overline{G}$: \[
\int fd\nu_{n}\;=\;\int f(g_{0}g_{1}\cdots g_{n-1})d\nu(g_{0})d\nu(g_{1})\cdots d\nu(g_{n-1}),\quad\nu_{0}=\delta_{e}.\]
 We claim that any weak{*} limit ${\mu}$ of the measures $\frac{1}{n}\sum_{i=0}^{n-1}\nu_{i}$
satisfies the conclusions of Theorem \ref{FK}. Clearly, the measure
$\mu$ is stationary: for any continuous function $f$ on $\overline{G}$,
we have \begin{eqnarray*}
 & \int f(g.h)d\mu(h)d\nu(g)\\
 & =\;\lim_{k\to\infty}\frac{1}{n_{k}}\sum_{i=0}^{n_{k}-1}\int f(gg_{0}g_{1}\cdots g_{i-1})d\nu(g_{0})d\nu(g_{1})\cdots d\nu(g_{i-1})d\nu(g)\\
 & \;=\;\lim_{k\to\infty}\frac{1}{n_{k}}\sum_{i=0}^{n_{k}-1}\int fd\nu_{i+1}\\
 & =\;\int fd\mu+\lim_{k\to\infty}\frac{1}{n_{k}}[\int fd\nu_{n_{k}}-f(e)]\;=\;\int fd\mu.\end{eqnarray*}
 In the same way, we get: \begin{eqnarray*}
 & \int h(g^{-1})d\mu(h)d\nu(g)\;\\
= & \;\lim_{k\to\infty}\frac{1}{n_{k}}\sum_{i=0}^{n_{k}-1}\int[d(Z_{i}(\omega),g^{-1})-d(Z_{i}(\omega),e)]d\mathbb{P}(\omega)d\nu(g)\\
= & \;\lim_{k\to\infty}\frac{1}{n_{k}}\int d(Z_{n_{k}},e)d\mathbb{P}(\omega)\;=\;\ell(\nu).\end{eqnarray*}

This shows that the measure $\mu$ has the desired properties. Moreover,
the measure $\mathbb{P}\times\mu$ on the space $\Omega^{+}\times\overline{G}$
is $\overline{T}$-invariant. There is a unique $\overline{T}$-invariant
measure $m$ on $\Omega\times\overline{G}$ that extends $\mathbb{P}\times\mu$.
The measure $m$ satisfies all the properties we needed in the proof
of Theorem \ref{main}. In particular, if $\ell(\nu)$ is positive,
\[
\mu(\partial G)\;=\;(\mathbb{P}\times\mu)(\Omega^{+}\times\partial G)\;=\; m(\Omega\times\partial G)\;=\;1.\]
 \end{proof}

\
 A bounded measurable $f:G\rightarrow\mathbb{R}$ is $\nu$-\emph{harmonic}
if \[
f(g)=\int_{G}f(gh)d\nu(h)\]
 for any $g\in G$. Constant functions are obviously $\nu$-harmonic.
If $f$ is a bounded harmonic function, then $f(Z_{n})$ is a bounded
martingale and therefore converges almost surely. We say that $(G,\nu)$
satisfies the \textit{Liouville property} (or $(G,\nu)$ is Liouville)
if the constant functions are the only bounded $\nu$-harmonic functions.

\begin{corollary} \label{thmkl}\cite{KL2} Let $G$ be a locally
compact group with a left invariant proper metric and $\nu$ be a
nondegenerate probability measure on $G$ with first moment. Then,
if $(G,\nu)$ is Liouville, there is a $1$-Lipschitz homomorphism
$T:G\rightarrow\mathbb{R}$ such that for almost every trajectory
$Z_{n}$ of the corresponding random walk, we have: \[
\lim_{n\rightarrow\infty}\frac{1}{n}T(Z_{n})\;=\;\int_{G}T(g)d\nu(g)\;=\; l(\nu).\]
 \end{corollary}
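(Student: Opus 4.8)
The plan is to build the homomorphism $T$ directly out of the stationary measure $\mu$ furnished by Theorem \ref{FK}, and to use the Liouville hypothesis precisely to strip away its dependence on the boundary point. I would set
\[
u(g) \;:=\; \int_{\overline{G}} h(g^{-1})\, d\mu(h).
\]
Since every $h\in\overline{G}$ is $1$-Lipschitz with $h(e)=0$, one has $|h(g^{-1})|\le d(e,g^{-1})$, so $u$ is well defined and finite, and $u(e)=0$. My candidate is $T:=u$.

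The first key step is to show that $u$ is harmonic up to the constant drift, namely that $\int_G u(gs)\,d\nu(s)=u(g)+\ell(\nu)$ for every $g$. I would prove this by inserting the action formula $h(s^{-1}g^{-1})=(s.h)(g^{-1})+h(s^{-1})$ and integrating against $d\mu(h)\,d\nu(s)$: the first summand collapses to $u(g)$ upon applying stationarity of $\mu$ to the bounded continuous observable $h'\mapsto h'(g^{-1})$ on $\overline{G}$, while the second summand is exactly $\ell(\nu)$ by the Furstenberg--Khasminskii formula of Theorem \ref{FK}. I expect this identity to be the main obstacle, since it requires matching the precise sign conventions of the action, checking that the evaluation functional is a legitimate (bounded, continuous) test function for stationarity, and invoking the exact form of the drift formula; once it is in hand the rest is forced.

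Next, for each fixed $s_0\in G$ I would consider $w_{s_0}(g):=u(s_0 g)-u(g)$. The drift identity makes the two copies of $\ell(\nu)$ cancel, so $w_{s_0}$ is genuinely $\nu$-harmonic, and left-invariance of $d$ gives $|w_{s_0}(g)|\le d(e,s_0)$, so $w_{s_0}$ is bounded. This is exactly where the hypotheses enter: by the Liouville property $w_{s_0}$ is constant, equal to $w_{s_0}(e)=u(s_0)$. Reading this off for all $s_0,g$ yields $u(s_0 g)=u(s_0)+u(g)$, so $u$ is a homomorphism $G\to(\mathbb{R},+)$. Being a homomorphism, $u(g)=-u(g^{-1})=-\int h(g)\,d\mu$ gives $|u(g)|\le d(e,g)$, and then $|u(g_1)-u(g_2)|=|u(g_2^{-1}g_1)|\le d(e,g_2^{-1}g_1)=d(g_1,g_2)$, so $T=u$ is $1$-Lipschitz.

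Finally, evaluating the drift identity at $g=e$ gives $\int_G T\,d\nu=u(e)+\ell(\nu)=\ell(\nu)$. Because $T$ is a homomorphism, $T(Z_n)=\sum_{i=0}^{n-1}T(g_i)$ is a sum of i.i.d.\ real random variables, each integrable since $|T(g)|\le d(e,g)$ and $\nu$ has a first moment. The classical strong law of large numbers then delivers $\frac1n T(Z_n)\to\int_G T\,d\nu=\ell(\nu)$ for almost every trajectory, completing the argument.
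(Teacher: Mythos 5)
Your proof is correct, and structurally it is the paper's proof with the use of the Liouville hypothesis repackaged. The paper first proves a clean general lemma: if $(G,\nu)$ is Liouville and $G$ acts continuously on a compact space, every stationary measure is invariant, because $\varphi_f(g):=\int f\,d(g_{\ast}\mu)$ is bounded and harmonic for each continuous $f$; full invariance of $\mu$ then makes $T(g'g)=T(g')+T(g)$ an immediate consequence of the cocycle identity $h(g^{-1}g'^{-1})=(g'.h)(g^{-1})+h(g'^{-1})$. You never establish invariance of $\mu$: you use stationarity only through the drift identity $\int u(gs)\,d\nu(s)=u(g)+\ell(\nu)$, and then apply Liouville to the bounded increments $w_{s_0}=u(s_0\,\cdot)-u(\cdot)$. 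These two devices are in fact the same mechanism: by the very same cocycle identity, $w_{s_0}(g)=\int (g.h)(s_0^{-1})\,d\mu(h)=\varphi_{f_{s_0}}(g)$ for the evaluation functional $f_{s_0}(h'):=h'(s_0^{-1})$, so you are invoking Liouville on exactly the subfamily of the paper's harmonic functions $\varphi_f$ needed for additivity, and your harmonicity computation for $w_{s_0}$ is the paper's stationarity computation for $\varphi_f$ in disguise. What your packaging buys is economy --- only evaluation functionals are needed, and $G$-invariance of $\mu$ is never used; what the paper's buys is the stronger, reusable statement that $\mu$ is invariant. Your side checks are sound: evaluation at a point is continuous on $\overline{G}$ for the compact-open topology and bounded by $d(e,g^{-1})$ since all elements of $\overline{G}$ are $1$-Lipschitz and vanish at $e$; the Fubini interchange behind the drift identity is covered by the first moment; $w_{s_0}$ is continuous, hence measurable as the Liouville property requires; and $d(e,s_0^{-1})=d(e,s_0)$, $d(e,g_2^{-1}g_1)=d(g_2,g_1)$ follow from left invariance. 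You also make explicit the endgame the paper leaves implicit: since $T$ is a homomorphism, $T(Z_n)=\sum_{i=0}^{n-1}T(g_i)$ is an i.i.d.\ sum of integrable variables, so the classical strong law yields $\frac{1}{n}T(Z_n)\to\int_G T\,d\nu=\ell(\nu)$ almost surely, where $\int_G T\,d\nu=\ell(\nu)$ is your drift identity at $g=e$, i.e.\ precisely the Furstenberg--Khasminskii formula of Theorem \ref{FK}.
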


\begin{proof} The key observation is that if $(G,\nu)$ is Liouville
and $G$ acts continuously on a compact space $Y$, then every stationary
measure $\mu$ is invariant. Indeed, for $f\in C(Y,\mathbb{R})$,
the function $\varphi(g):=\int fd(g_{\ast}\mu)$ is harmonic and bounded,
therefore constant. In particular, the measure $\mu$ from Theorem
\ref{FK} is invariant, and if we set \[
T(g):=\int h(g^{-1})\mu(dh),\]
 The mapping $T$ is Lipschitz continuous and is a group homomorphism
because we have: \begin{align*}
T(g'g)\; & =\;\int h(g^{-1}g'^{-1})\mu(dh)\\
 & =\;\int(g'.h)(g^{-1})\mu(dh)+\int h(g'^{-1})\mu(dh)\\
 & =\;\int h(g^{-1})(g'_{\ast}\mu)(dh)+T(g')\\
 & =\; T(g)+T(g'),\end{align*}
 where we used the invariance of $\mu$ at the last line. Finally,
by the Furstenberg Khasminskii formula, we have: \[
\ell(\nu)=\int T(g)d\mu(g).\]
 \end{proof}

\

A measure $\nu$ on $G$ is called \emph{symmetric} if it is invariant
under the mapping $g\mapsto g^{-1}$. A measure is \emph{centered}
if every homomorphism of $G$ into $\mathbb{R}$ is centered, meaning
that the $\nu$-weighted mean value of the image is 0. Every symmetric
measure with first moment $\nu$ is centered, since for any homomorphism
$T:G\rightarrow\mathbb{R}$, the mean value, which is \[
\int_{G}T(g)d\nu(g)=\int_{G}T(g^{-1})d\nu(g)=-\int_{G}T(g)d\nu(g),\]
 must hence equal 0. By simple contraposition from Corollary \ref{thmkl},
we get:\begin{corollary}\label{Varo}\cite{KL2} Let $G$ be a locally
compact group with a left invariant proper metric and $\nu$ be a
nondegenerate centered probability measure on $G$ with first moment.
Then, if $l(\nu)>0$, there exist nonconstant bounded $\nu$-harmonic
functions. \end{corollary}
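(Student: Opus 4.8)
The plan is to establish the statement by contraposition, reading off everything from Corollary \ref{thmkl}. The contrapositive of the desired implication is: if $\nu$ is centered and $(G,\nu)$ is Liouville (no nonconstant bounded $\nu$-harmonic functions), then $l(\nu)=0$. Proving this contrapositive is equivalent to the corollary, since it says that centeredness together with $l(\nu)>0$ precludes the Liouville property.

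First I would assume, toward a contradiction, that $(G,\nu)$ is Liouville, i.e. that the only bounded $\nu$-harmonic functions on $G$ are the constants. All the standing hypotheses of Corollary \ref{thmkl} are in force here, namely that $G$ carries a left invariant proper metric and that $\nu$ is nondegenerate with first moment. I may therefore invoke that corollary to obtain a $1$-Lipschitz homomorphism $T:G\to\mathbb{R}$ with
\[
\int_{G}T(g)\,d\nu(g)\;=\;l(\nu).
\]

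The decisive step is to confront this identity with the centeredness hypothesis. By the definition of centered, every homomorphism of $G$ into $\mathbb{R}$ has vanishing $\nu$-mean; in particular the homomorphism $T$ just produced satisfies $\int_{G}T\,d\nu=0$. Comparing the two evaluations forces $l(\nu)=0$, which contradicts the assumption $l(\nu)>0$. Consequently $(G,\nu)$ cannot be Liouville, and this is precisely the claim that nonconstant bounded $\nu$-harmonic functions exist.

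I do not expect a substantive obstacle, as all the real work has been absorbed into Corollary \ref{thmkl} (which rests on the Furstenberg-Khasminskii formula of Theorem \ref{FK} and on the observation that, under the Liouville property, every stationary measure on a compact $G$-space is invariant). The only point meriting a word of care is to note that the object $T$ delivered by Corollary \ref{thmkl} is genuinely a group homomorphism $G\to\mathbb{R}$, so that the centeredness condition applies to it without modification; once this is recorded, the proof is a single contrapositive line.
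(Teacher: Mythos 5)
Your proposal is correct and coincides with the paper's own argument: the paper derives Corollary \ref{Varo} precisely ``by simple contraposition from Corollary \ref{thmkl},'' using the fact that centeredness forces the homomorphism $T$ to satisfy $\int_{G}T\,d\nu=0$, which is incompatible with $\int_{G}T\,d\nu=l(\nu)>0$. Your added remark that $T$ is genuinely a homomorphism (so centeredness applies to it) is exactly the right point of care, and it is guaranteed by the construction in Corollary \ref{thmkl}.
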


Corollary \ref{Varo} was known in particular for $\nu$ with finite
support (\cite{V}, \cite{M}) or in the continuous case, for $\nu$
with compact support and density (\cite{Al}).

\

One case when all probability measures on $G$ are centered is when
there is no group homomorphism from $G$ to $\mathbb{R}$. We can
apply Corollary \ref{Varo} to a countable finitely generated group.
Let $S$ be a finite symmetric generator for $G$, and endow $G$
with the left invariant metric $d(x,y)=|y^{-1}x|$ where $|z|$ is
the shortest length of a $S$-word representing $z$. We say that
$G$ has subexponential growth if $\lim_{n}\frac{1}{n}\ln A_{n}=0$,
where $A_{n}$ is the number of elements $z$ of $G$ with $|z|\leq n.$
Such a group has automatically the Liouville property (\cite{Av}).
This yields:

\begin{corollary}\label{what?} \cite{KL2} Let $G$ be a finitely
generated group with subexponential growth and $H^{1}(G,\mathbb{R})=0$.
Then for any nondegenerate $\nu$ on $G$ with first moment, we have
$\ell(\nu)=0$. \end{corollary}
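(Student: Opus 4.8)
The plan is to assemble three ingredients that are already in place, so that the corollary becomes essentially a formal consequence of Corollary \ref{thmkl}. First I would verify that the standing framework of Section 6 applies: the word metric $d(x,y)=|y^{-1}x|$ attached to the finite symmetric generating set $S$ is a proper, left invariant metric generating the (discrete) topology of $G$, and by hypothesis $\nu$ is nondegenerate with a first moment relative to $d$. Hence the linear drift $\ell(\nu)=\lim_n \frac1n d(e,Z_n)$ exists and is nonnegative.

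Next I would bring in the growth hypothesis through the single external input: by Avez's theorem (\cite{Av}) a finitely generated group of subexponential growth is Liouville, i.e. every bounded $\nu$-harmonic function is constant. This is precisely the hypothesis of Corollary \ref{thmkl}, which therefore supplies a $1$-Lipschitz homomorphism $T\colon G\to\mathbb{R}$ satisfying $\ell(\nu)=\int_G T(g)\,d\nu(g)$.

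The decisive step is to feed in $H^{1}(G,\mathbb{R})=0$. For a trivial coefficient module the first group cohomology coincides with $\mathrm{Hom}(G,\mathbb{R})$, since every $1$-coboundary vanishes and every $1$-cocycle is just a homomorphism. Thus $H^{1}(G,\mathbb{R})=0$ forces $T\equiv 0$, and the Furstenberg--Khasminskii formula $\ell(\nu)=\int_G T\,d\nu$ then yields $\ell(\nu)=0$.

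An equivalent contrapositive packaging would run through Corollary \ref{Varo}: since $\mathrm{Hom}(G,\mathbb{R})=0$, every probability measure on $G$ is (vacuously) centered, so if $\ell(\nu)$ were strictly positive then Corollary \ref{Varo} would produce nonconstant bounded $\nu$-harmonic functions, contradicting the Liouville property guaranteed by \cite{Av}. Either route makes clear that the only genuine content lies outside the earlier results, namely the cited implication that subexponential growth implies Liouville; the remaining steps are the identification of $H^{1}(G,\mathbb{R})$ with $\mathrm{Hom}(G,\mathbb{R})$ and routine verification of hypotheses, so I anticipate no real obstacle.
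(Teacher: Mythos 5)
Your proposal is correct and is essentially the paper's own argument: the paper deduces the corollary in one line by noting that $H^{1}(G,\mathbb{R})=0$ means there are no nontrivial homomorphisms $G\to\mathbb{R}$ (so every measure is vacuously centered), invoking Avez \cite{Av} for the Liouville property, and applying Corollary \ref{Varo} --- which is exactly your ``contrapositive packaging,'' and since Corollary \ref{Varo} is itself obtained in the paper by contraposition from Corollary \ref{thmkl}, your primary route through the homomorphism $T$ of Corollary \ref{thmkl} is the same proof unwound. No gaps; your explicit identification of $H^{1}(G,\mathbb{R})$ with $\mathrm{Hom}(G,\mathbb{R})$ merely spells out what the paper leaves implicit.
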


Observe that conversely, if there exists a nontrivial group homomorphism
$T$ from a finitely generated group into $\mathbb{R}$, then there
exists a nondegenerate probability $\nu$ on $G$, with first moment
and $\ell(\nu)>0$. Indeed, there exists $M$ such that $[-M,M]$
contains all the images of the elements of the generating set $S$.
We can choose $\nu$ carried by all the elements of $S$ with images
in $[0,M]$. Since $T$ is nontrivial, $\int T(g)d\nu(g)>0$. The
measure $\nu$ is nondegenerate, has finite support and $\ell(\nu)>0$
since for all $g\in G,|T(g)|\leq M|g|$.

\

\section{Riemannian covers.}

In this section we consider a complete connected Riemannian manifold
$(M,g)$ with bounded sectional curvatures. In particular, if $d_{M}$
is the Riemannian distance on $M$, $(M,d)$ is a proper space. Associated
to the metric is the Laplace-Beltrami operator $\Delta.$ A function
$f$ is \emph{harmonic} if $\Delta f=0$. We say that $M$ is \emph{Liouville}
if all bounded and harmonic functions are constant.

Associated to $\Delta$ is a diffusion process $B_{t}$ called \emph{Brownian
motion}. Since the curvature is bounded and $M$ is complete, the
Brownian motion is defined for all time. For all $x\in M$, there
is a probability $\mathbb{P}_{x}$ on $C(\mathbb{R}_{+},M)$ such
that the process $B_{t}$ given by the $t$ coordinate is a Markov
process with generator $\Delta$ and $B_{0}=x$. We can define \[
\ell_{g}:=\limsup_{t\rightarrow\infty}\frac{1}{t}d_{M}(x_{0},B_{t}),\]
 for any $x_{0}\in M$. \begin{theorem} \label{BM} \cite{KL3} Assume
that $(M,g)$ is a regular covering of a Riemannian manifold which
has finite Riemannian volume and bounded sectional curvatures. Then
$M$ is Liouville if, and only if, \[
\lim_{t\rightarrow\infty}\frac{1}{t}d(x_{0},B_{t})=0\textrm{ a.s.}.\]

\end{theorem}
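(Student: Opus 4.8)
My plan is to deduce Theorem~\ref{BM} from the random-walk results of Section 6 by \emph{discretizing} the Brownian motion along the group of deck transformations. Let $N$ be the base manifold, so that $M/\Gamma=N$ for the countable group $\Gamma$ of deck transformations, which acts on $M$ by isometries; fix $x_0\in M$ and equip $\Gamma$ with a proper left invariant metric $d_\Gamma$ (a word metric when $\Gamma$ is finitely generated, or the one supplied by \cite{St} in general). First I would run a Lyons--Sullivan type discretization: a suitable stopping rule for $B_t$ records a sequence of elements of $\Gamma$ and produces a nondegenerate probability $\nu$ on $\Gamma$ together with an i.i.d. random walk $Z_n$ that shadows the trajectory of $B_t$. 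Three properties have to be verified, all of them resting on the bounded geometry of $M$ and the finiteness of the volume of $N$: (a) $\nu$ has a finite first moment for $d_\Gamma$; (b) bounded $\Delta$-harmonic functions on $M$ correspond bijectively to bounded $\nu$-harmonic functions on $\Gamma$, so that $M$ is Liouville if and only if $(\Gamma,\nu)$ is Liouville; and (c) the two linear drifts vanish together, $\ell_g=0\iff\ell(\nu)=0$.

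Granting (a)--(c), the implication ``$M$ Liouville $\Rightarrow\ell_g=0$'' is exactly where Section 6 is used. By (b) the pair $(\Gamma,\nu)$ is Liouville, so Corollary~\ref{thmkl} yields a $1$-Lipschitz homomorphism $T\colon\Gamma\to\mathbb{R}$ with $\ell(\nu)=\int_\Gamma T\,d\nu$. It remains to see that the finiteness of the volume of $N$ forces $\nu$ to be \emph{centered}, i.e. $\int_\Gamma T\,d\nu=0$ for every homomorphism $T\colon\Gamma\to\mathbb{R}$. Since Brownian motion is reversible with respect to the Riemannian volume and a fundamental domain for $\Gamma$ in $M$ has finite, $\Gamma$-invariant volume, the discretized walk is reversible against the uniform measure on $\Gamma$; thus $\nu$ can be taken symmetric, and symmetric measures are centered, as recalled before Corollary~\ref{Varo}. (Geometrically the same fact can be read off by representing $T$ by a harmonic $1$-form on $N$, which lifts to a $\Gamma$-equivariant harmonic function $\widehat T$ on $M$ with $\widehat T(\gamma x)=\widehat T(x)+T(\gamma)$; then $\widehat T(B_t)$ is a martingale, so the expected displacement along $T$ stays constant and $\int_\Gamma T\,d\nu=0$.) Hence $\ell(\nu)=0$, and by (c) $\ell_g=0$. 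Equivalently, this is the contrapositive of Corollary~\ref{Varo}: a positive drift of the centered walk produces a nonconstant bounded harmonic function, breaking the Liouville property.

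For the converse, ``$\ell_g=0\Rightarrow M$ Liouville'', I would argue by contraposition using the classical fundamental inequality between entropy and drift. If $M$ is not Liouville then, by (b), $(\Gamma,\nu)$ is not Liouville, so its asymptotic entropy $h$ is positive (Kaimanovich--Vershik, Derriennic). The inequality $h\le\ell(\nu)\,v$, with $v<\infty$ the logarithmic volume growth of $(\Gamma,d_\Gamma)$ furnished by bounded geometry, then forces $\ell(\nu)>0$, whence $\ell_g>0$ by (c). Combining the two implications gives the stated equivalence, the details being those of \cite{KL3}.

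The main obstacle is not the abstract ergodic input but the passage (a)--(c) in the \emph{finite-volume}, as opposed to cocompact, situation. In the cusps the injectivity radius degenerates, so one must control the stopping times and the jump sizes of $Z_n$ finely enough that $\nu$ still has a first moment and that $\ell(\nu)$ is genuinely proportional to $\ell_g$; here finiteness of the volume is precisely what makes the relevant return times integrable through the ergodic theorem on $N$. The second delicate point is centeredness: making the reversibility/symmetry argument (or the Hodge-theoretic construction of $\widehat T$) rigorous under only bounded curvature and finite volume is what confines the clean equivalence of Theorem~\ref{BM} to covers of finite-volume manifolds, rather than to arbitrary Liouville manifolds.
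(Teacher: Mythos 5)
Your plan coincides with the paper's own proof: the new implication ($M$ Liouville $\Rightarrow$ zero speed) is obtained exactly as in Section 7, via a Furstenberg--Lyons--Sullivan discretization producing a nondegenerate $\nu$ on $\Gamma$ with your properties (a)--(c), followed by centeredness of $\nu$ and the contrapositive of Corollary~\ref{Varo}, while the converse is Kaimanovich's entropy argument, which the paper simply cites to \cite{K1} and you re-sketch through $h\le\ell(\nu)\,v$. The one place you are looser than the paper is the symmetry of $\nu$: it does not follow from bare reversibility of the plain discretization but from Babillot's observation that the modified construction of \cite{BL2} is symmetric --- which is precisely how \cite{KL3} secures it --- and you correctly flag this as the delicate step.
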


The \textquotedbl{}if\textquotedbl{} part was proved by Kaimanovich,
see \cite{K1}, and the converse is clear if the Brownian motion is
recurrent on $M$. The proof of the new implication in Theorem \ref{BM}
in the transient case uses the Furstenberg-Lyons-Sullivan discretization
procedure. Let $\Gamma$ be the covering group of isometries of $M$.
This discretization consists in the construction of a probability
measure $\nu$ on $\Gamma$, with the following properties: 
\begin{itemize}
\item The restriction $f(\gamma):=F(\gamma x_{0})$ is a one-to-one correspondence
between bounded harmonic functions on $M$ and bounded functions on
$\Gamma$ which satisfy \[
f(\gamma)=\sum_{g\in\Gamma}f(\gamma g)\nu(g)\]
. 
\item If $\gamma_{1},\dots,\gamma_{n}$ are chosen independent and with
distribution $\nu$, then $\lim_{n\rightarrow\infty}\frac{1}{n}d_{M}(x_{0},\gamma_{1}\dots\gamma_{n}x_{0})$
exists. It vanishes a.e. if, and only if, $\lim_{t\rightarrow\infty}\frac{1}{t}d_{M}(x_{0},B_{t})=0$
a.s.. 
\item In the case when the Brownian motion is transient, one can choose
$\nu$ symmetric, i.e. such that for all $\gamma$ in $\Gamma$, $\nu(\gamma^{-1})=\nu(\gamma)$. 
\end{itemize}
The first property goes back to Furstenberg (\cite{F2}) and has been
systematically developed by Lyons and Sullivan (\cite{LS}) and Kaimanovich
(\cite{K5}). The second one was observed in certain situations by
Guivarc'h (\cite{G}) and Ballmann (\cite{Ba}). Babillot observed
that the modified construction of \cite{BL2} has the symmetry property.
Given the above, proving Theorem \ref{BM} mostly reduces to Corollary
\ref{Varo}, if we can show that hypotheses of Corollary \ref{Varo}
are satisfied. We endow $\Gamma$ with the metric defined by the metric
of $M$ on the orbit $\Gamma x_{0}$. This defines a left invariant
and proper metric on $\Gamma$: bounded sets are finite, because they
correspond to pieces of the orbit situated in a ball of finite volume.
The measure $\nu$ is nondegenerate because its support is the whole
$\Gamma$. It is shown in \cite{KL3} that the measure $\nu$ has
a first moment. The proof uses the details of the construction, but
the idea is that the distribution of $\nu$ is given by choosing some
random time and looking at the point $\gamma x_{0}$ close to the
trajectory of the Brownian motion at that time. Since the curvature
is bounded from below, if the expectation of the time is finite, the
expectation of the distance of the Brownian point at that time is
finite as well. It also follows that the rates of escape of the Brownian
motion and of the Random walk are proportional. Therefore, if the
manifold $(M,g)$ is Liouville, then the Random walk $(G,\nu)$ is
Liouville. By Corollary \ref{Varo}, $\lim_{n\rightarrow\infty}\frac{1}{n}d_{M}(x_{0},\gamma_{1}\dots\gamma_{n}x_{0})=0$
and therefore, $\lim_{t\rightarrow\infty}\frac{1}{t}d_{M}(x_{0},B_{t})=0$
a.s..

\
 There are many results about the Liouville property for Riemannian
covers of a compact manifold. Theorem \ref{BM} implies that the corresponding
statements hold for the rate of escape of the Brownian motion. Guivarc'h
(\cite{G}) showed that if the group $\Gamma$ is not amenable, then
$(M,g)$ is not Liouville, whereas when $\Gamma$ is polycyclic, $(M,g)$
is Liouville (Kaimanovich \cite{K2}). Lyons and Sullivan (\cite{LS},
see also \cite{E} for a simply connected example) have examples of
amenable covers without the Liouville property.

\
 \textbf{Acknowledgements.} This survey grew out from lectures given
by both authors at the IIIème Cycle Romand de Mathématiques in Les
Diablerets in March 2008 and from conversations there. We thank Tatiana
Smirnova-Nagnibeda and Slava Grigorchuk for their invitation to this
friendly and stimulating meeting.

\end{document}